\newcommand{\abs}[1]{\lvert#1\rvert}
\newcommand{\absinn}[1]{\vert\langle {#1} \rangle\rvert}
\newcommand{\R}{{\mathbb R}}
\newcommand{\Sd}{{\mathbb S}^{d-1}}
\newcommand{\vx}{{\mathbf x}}
\newcommand{\vy}{{\mathbf y}}
\newcommand{\Z}{{\mathbb Z}}
\renewcommand{\omega}{\eta}
\newcommand{\RNum}[1]{\uppercase\expandafter{\romannumeral #1\relax}}
\newtheorem{definition}{Definition}[section]
\newtheorem{corollary}{Corollary}[section]
\newtheorem{prop}{Proposition}[section]
\newtheorem{theorem}{Theorem}[section]
\newtheorem{lemma}{Lemma}[section]
\newtheorem{conjecture}[definition]{Conjecture}
\newtheorem{remark}{Remark}[section]
\newtheorem{problem}{Problem}[section]
\date{}
\begin{document}
\baselineskip 14pt
\bibliographystyle{plain}
\title{Bounds on antipodal spherical designs with few angles}

\author{Zhiqiang Xu}
\thanks{Zhiqiang Xu was supported  by Beijing Natural Science
Foundation (Z180002) and by NSFC grant (11688101).}

\address{LSEC, Inst.~Comp.~Math., Academy of
Mathematics and System Science,  Chinese Academy of Sciences, Beijing, 100091, China
\newline
School of Mathematical Sciences, University of Chinese Academy of Sciences, Beijing 100049, China}
\email{xuzq@lsec.cc.ac.cn}

\author{Zili Xu}
\address{ LSEC, Inst.~Comp.~Math., Academy of
Mathematics and System Science,  Chinese Academy of Sciences, Beijing, 100091, China
\newline
School of Mathematical Sciences, University of Chinese Academy of Sciences, Beijing 100049, China}
\email{xuzili@lsec.cc.ac.cn}

\author{Wei-Hsuan Yu}
\address{ Mathematics Department, National Central University, Taoyuan, Taiwan}
\email{u690604@gmail.com}

\keywords{Spherical designs, Equiangular tight frames, Levenstein-equality packings}
\begin{abstract}

A finite subset $X$ on the unit  sphere $\Sd$ is called an $s$-distance  set with
strength $t$ if its angle set $A(X):=\{\langle \vx,\vy\rangle : \vx,\vy\in X,
\vx\neq\vy \}$ has size $s$, and $X$ is a spherical $t$-design but not a spherical
$(t+1)$-design. In this paper, we consider to estimate the maximum size of such
antipodal set $X$ for small $s$.  Motivated by the method developed by Nozaki and
Suda \cite{Nozaki},  for each even integer $s\in[\frac{t+5}{2}, t+1]$ with $t\geq 3$,
we improve the best known upper bound of Delsarte, Goethals and Seidel
\cite{Delsarte}. We next focus on two special cases: $s=3,\ t=3$ and $s=4,\ t=5$.
Estimating the size of $X$ for these two cases is equivalent to estimating the size
of real equiangular tight frames (ETFs) and Levenstein-equality packings,
respectively. We improve the previous estimate on the size of real ETFs and
Levenstein-equality packings. This in turn gives an upper bound on $\abs{X}$ when
$s=3,\ t=3$ and $s=4,\ t=5$, respectively.

\end{abstract}

\maketitle

\section{Introduction}
\subsection{Spherical designs with few angles}
A finite set $X\subset \Sd$ is called   an $s$-distance set if its angle  set
$A(X):=\{\langle \vx,\vy\rangle : \vx,\vy\in X, \vx\neq \vy \}$ contains $s$ distinct
values, and we say  $X$ has strength $t$ if $t$ is the largest integer such that $X$
is a spherical $t$-design.  We say that a finite set $X\subset\Sd$ is  a spherical
$t$-design if the following equality
\begin{equation*}
\int_{\Sd}f(\vx)d\mu_d(\vx)=\frac{1}{|X|}\sum_{\vx\in X}^{}f(\vx)
\end{equation*}
holds for any polynomial $f$ of degree at most $t$   (see \cite{Delsarte}). Here,
$\mu_d$ is the Lebesgue measure on $\Sd$ normalized by $\mu_d(\Sd)=1$. In this paper
we focus on the following problem which originally arises in design theory:
\begin{problem}\label{pr:1}
Given $s,t\in \Z_+$, what is the maximum size of an $s$-distance  set $X\subset\Sd$ with strength $t$?
\end{problem}
Spherical designs with few angles usually display beautiful  symmetry and optimality
\cite{Cohn,Bilyk,Hardin}, e.g., the universal optimality of the $600$-cell on
$\mathbb{S}^3$ \cite{Cohn}, which have been studied for several decades
\cite{Delsarte,Bannai2,Bannai3}. Estimating the size of these designs provides a
necessary condition on their existence. See \cite{Bannai,Nozaki} and the references
for the recent work.

In this paper we  devote our attention to the antipodal  case of this problem, i.e.,
$X=-X$. We aim to bound the size of antipodal $s$-distance sets in $\Sd$ with
strength $t$. Recall that the strength of an antipodal set must  be an odd integer
\cite[Theorem 5.2]{Delsarte}. According to \cite[Theorem 6.8]{Delsarte}, we always
have
\begin{equation}\label{absolute1}
|X|\leq  2\binom{d+s-2}{s-1}\quad \text{and}\quad 2s\geq t+1
\end{equation}
provided  $X\subset\Sd$ is an antipodal $s$-distance set  with strength $t$. The
upper bound in
 (\ref{absolute1}) is called the Delsarte-Goethals-Seidel bound for an antipodal
spherical $s$-distance set. Furthermore, the equality in (\ref{absolute1}) holds  if
and only if the $s$-distance set $X$ forms a tight spherical $(2s-1)$-design, i.e.,
$t+1=2s$. In this paper we will focus on estimating the size of $X$ when $2s$ is
slightly greater than $t+1$.

\subsection{The optimal line packing problem}
 It is particularly interesting to consider two special
cases, i.e., $s=3,\ t=3$ and $s=4,\ t=5$. These two cases are closely related to the
optimal line packing problem, which aims to find a finite set
$\Phi=\{\boldsymbol\varphi_i \}_{i=1}^{n}\subset\Sd$ with fixed size $n>d$ and the
minimal coherence $\mu(\Phi):=\max\limits_{i\neq j}|\langle
\boldsymbol\varphi_i,\boldsymbol\varphi_j\rangle|$ (see
\cite{Conway,Fickus2,Haas,Jasper}). The followings are two well-known lower bounds on
the coherence:
\begin{subequations}\label{welchleven}
\begin{align}
\mu(\Phi)&\,\,\geq\,\, \sqrt{\frac{n-d}{d(n-1)}},\ \ \ \quad \quad{\rm if}\ n> d, \label{welch}\\
\mu(\Phi)&\,\,\geq\,\,  \sqrt{\frac{3n-d(d+2)}{(d+2)(n-d)}},\ {\rm if}\ n>\frac{d(d+1)}{2}. \label{leven}
\end{align}
\end{subequations}
The (\ref{welch}) is called the Welch bound \cite{Welch} and the (\ref{leven}) is
called the Levenstein bound \cite{leven,leven2}. It is well known that the equality
in (\ref{welch}) occurs when $\Phi\cup-\Phi$ forms an antipodal $3$-distance
$3$-strength set or an antipodal $3$-distance $5$-strength set with size $d(d+1)$
\cite[Example 8.3]{Delsarte}; and the equality in (\ref{leven}) occurs when
$\Phi\cup-\Phi$ forms an antipodal $4$-distance $5$-strength set or an antipodal
$4$-distance $7$-strength set with size $\frac{d(d+1)(d+2)}{3}$ \cite[Example
8.4]{Delsarte}. Hence, estimating the size of the antipodal $3$-distance $3$-strength
sets and of the antipodal $4$-distance $5$-strength sets is  helpful  to know the
existence of these two kinds of optimal  packings. In the context of frame theory, a
set achieving the Welch bound in (\ref{welch}) is  known as a real equiangular tight
frame (ETF). Hence, bounding the size of an antipodal $3$-distance set with strength
$3$ is equivalent to bounding the size of a real ETF whose size is strictly smaller
than $\frac{d(d+1)}{2}$. This is particularly interesting since the existence of real
ETFs is a long-standing open problem for most pairs $(d,n)$ \cite{ETF,ETF2}. For the
nontrivial case where $n>d+1>2$, an ETF may exist only if its size $n$ satisfies the
Gerzon bound \cite{lemmens,ETF,Fickus2}:
\begin{equation}\label{Gerzon}
d+\frac12+\sqrt{2d+\frac14}\leq n\leq \frac{d(d+1)}{2}.
\end{equation}

\subsection{Related work}

We overview the known upper bounds on antipodal  spherical designs with few
angles. Let $X\subset\Sd$ be an antipodal $s$-distance set with strength $t$. As said
before,  $t$ must be an odd integer \cite[Theorem 5.2]{Delsarte}. Set
\begin{equation}\label{hi}
h_0:=1, \ h_1:=d,\ h_k:=\binom{d+k-1}{k}-\binom{d+k-3}{k-2},\,\, k\geq 2
\end{equation}
and
\begin{equation}\label{deltas}
\delta_s:=\left\{\begin{array}{ll} 0,  & \text{if $s$ is even,}\\
1, & \text{if $s$ is odd.}\end{array}\right.
\end{equation}
For each odd integer $t\in[s-\delta_s-1,2s-2\delta_s-3]$,  Nozaki and Suda in \cite[Corollary 3.7]{Nozaki}  derived a
new upper bound on $|X|$:
\begin{equation}\label{odds}
|X|\leq 2\binom{d+s-\delta_s-1}{s-\delta_s}-2h_{t-s+\delta_s+1}.
\end{equation}
If $s\geq 3$ is odd and $t\in[s-2,2s-5]$, it is easy to see that the bound in
(\ref{odds}) lowers the Delsarte-Goethals-Seidel bound by $2h_{t-s+2}$. If $s\geq 2$
is even and $t\in[s-1,2s-3]$, the upper  bound in (\ref{odds}) becomes
$2\binom{d+s-1}{s}-2h_{t-s+1}$. When $s$ is fixed,  a simple calculation shows that
$2\binom{d+s-1}{s}-2h_{t-s+1}=\Theta(d^s)$  while the Delsarte-Goethals-Seidel bound
in (\ref{absolute1}) is $\Theta(d^{s-1})$. Hence, if $s$ is an fixed even integer and
$d$ is large enough, the upper bound in (\ref{odds}) is larger  than the
Delsarte-Goethals-Seidel bound.


\subsection{Our contributions}

Assume that $X\subset\Sd$ is an antipodal $s$-distance set  with strength $t$. The
aim of this paper is to present a better upper bound on  $\abs{X}$.

\subsubsection{The general case}

Motivated by the methods developed in \cite{Nozaki}, we present an upper bound for
$\abs{X}$ which lowers the Delsarte-Goethals-Seidel bound  when
$s\in[\frac{t+5}{2},t+1]$ is an even integer and $t\geq 3$.

\begin{theorem}\label{general}
Let $d\geq2$ be an integer. Assume that $X\subset\Sd$ is an antipodal $s$-distance
set with strength $t \geq 3$,  where  $s\in[\frac{t+5}{2},t+1]$ is an even integer.
Then, we have
\begin{equation}\label{generalcase}
|X|\,\,\leq\,\, 2\binom{d+s-2}{s-1}-2h_{t-s+2},
\end{equation}
where $h_k$ is defined in (\ref{hi}) for each $k\geq 0$.
\end{theorem}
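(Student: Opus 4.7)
The plan is to adapt the linear-programming / polynomial method of Delsarte-Goethals-Seidel as refined by Nozaki-Suda \cite{Nozaki} to the even-$s$ antipodal regime. The starting point is the LP bound: for any polynomial $F(u) = \sum_k f_k Q_k(u)$ expanded in Gegenbauer polynomials associated to $\Sd$, with $f_0 > 0$, $f_k \geq 0$ for $k > t$, and $F(\alpha) \leq 0$ for all $\alpha \in A(X)$, one has $|X| \leq F(1)/f_0$; the $t$-design hypothesis frees $f_k$ for $1 \leq k \leq t$ to take any sign. Because $X$ is antipodal, I would separate the pair $\vy=-\vx$ (which contributes $F(-1)$) and work with the variant $|X| \leq (F(1)+F(-1))/f_0$, requiring $F\leq 0$ only on $A(X)\setminus\{-1\}$.

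A structural observation: since $s$ is even and $X=-X$, a parity count forces $0 \in A(X)$ (the set $A(X)\setminus\{-1\}$ is closed under negation and has odd cardinality $s-1$), so $A(X) = \{-1, 0, \pm\beta_1, \ldots, \pm\beta_{(s-2)/2}\}$. The natural reduced annihilator is the even polynomial
\[
F_0(u) = u^2 \prod_{i=1}^{(s-2)/2}(u^2-\beta_i^2)
\]
of degree $s$, which vanishes on $A(X)\setminus\{-1\}$ and has $F_0(\pm 1)>0$. Its Gegenbauer expansion $F_0 = \sum_k g_k Q_k$ has only even-indexed terms, and a routine computation recovers $(F_0(1)+F_0(-1))/g_0 = 2\binom{d+s-2}{s-1}$, i.e.\ the Delsarte-Goethals-Seidel bound.

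To extract the improvement $-2h_{t-s+2}$, I would follow the Nozaki-Suda correction strategy: perturb $F_0$ by subtracting a combination $\sum_{k=1}^{t}c_k Q_k$. Since $X$ is a $t$-design, this perturbation contributes zero to $\sum_{\vx,\vy\in X}F(\langle \vx,\vy\rangle)$ and hence does not weaken the lower bound $g_0|X|^2\leq \sum_{\vx,\vy}F$; but it does decrease $F(1)+F(-1)$, tightening the bound. The coefficients $c_k$ are to be chosen so that the cumulative decrease saturates exactly $2h_{t-s+2}$, while $F\leq 0$ remains valid on $A(X)\setminus\{-1\}$. The hypothesis $s\in[\tfrac{t+5}{2},t+1]$ (with $t$ odd) places the target degree $t-s+2$ in the range $[1,s-3]$, and matches its parity to that of $s-1$, which is precisely the window in which the correction can be constructed within the degree-$(t-s+2)$ harmonic layer.

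The main obstacle I anticipate is producing a full $h_{t-s+2}$-dimensional family of admissible corrections, i.e.\ showing that there are $h_{t-s+2}$ linearly independent directions of perturbation that simultaneously preserve the sign constraints on $A(X)\setminus\{-1\}$ and yield the claimed decrease in $F(1)+F(-1)$. I expect this to reduce to a Funk-Hecke-type computation that identifies the perturbation space with $\mathrm{Harm}_{t-s+2}(\Sd)$, combined with a careful sign analysis of $Q_k(\beta_i)$ and $Q_k(0)$ via the three-term recurrence for Gegenbauer polynomials; the automatic non-negativity of $f_k$ for $k>t$ is not a concern because the correction has degree $\leq t$.
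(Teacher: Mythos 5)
Your plan follows the linear-programming side of the Delsarte machinery, but the paper's theorem is not an LP bound and the route you describe breaks at two concrete points. First, the base step is already unjustified: with $F_0(u)=u^2\prod_i(u^2-\beta_i^2)$ the quantity $(F_0(1)+F_0(-1))/g_0$ is not identically $2\binom{d+s-2}{s-1}$; it depends on the unknown angles. For instance, for $s=4$ one computes $g_0=\frac{3-(d+2)\beta^2}{d(d+2)}$ and $2F_0(1)/g_0=\frac{2d(d+2)(1-\beta^2)}{3-(d+2)\beta^2}$, which equals the Delsarte--Goethals--Seidel value $\frac{d(d+1)(d+2)}{3}$ only when $\beta^2=\frac{3}{d+4}$, and $g_0$ is even negative once $\beta^2>\frac{3}{d+2}$, so the LP inequality is not applicable at all. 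The absolute bound $2\binom{d+s-2}{s-1}$ is a Fisher-type dimension bound, not the LP value of the annihilator. Second, the correction step cannot produce the claimed improvement: since $t$ is odd and $s$ even, the target index $t-s+2$ is odd, and for odd $k$ one has $G_k^{(d)}(1)+G_k^{(d)}(-1)=0$, so subtracting odd-degree Gegenbauer terms does not change $F(1)+F(-1)$ at all; even-degree corrections change it by multiples of even-index $h_k$'s and must simultaneously keep $F\le 0$ at the unknown points $0,\pm\beta_i$, and nothing in this scheme forces the saturating decrease to be exactly $2h_{t-s+2}$, a dimension of a harmonic space. Tellingly, your argument never uses that the strength is exactly $t$ (that $X$ is \emph{not} a $(t+1)$-design), whereas this hypothesis is essential in the actual proof.

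What the paper does instead (and what you would need): pass to a half $\hat X$ of $X$; the annihilator $F_{\hat X}$ is odd of degree $s-1$, so Lemma \ref{dim} gives $|\hat X|\le \dim\bigl(\sum_{k=0}^{(s-2)/2}V_{2k+1}(\hat X)\bigr)$. The identity $\mathbf{I}=\sum_k f_{2k+1}\mathbf{D}_{2k+1}(\hat X)$, combined with the design orthogonality $\mathbf{D}_k(\hat X)\mathbf{D}_l(\hat X)=\frac{|X|}{2}\mathbf\Delta_{k,l}\mathbf{D}_k(\hat X)$ for $k+l\le t$ (Corollary \ref{coro-anti}), yields the subspace containment $V_{t-s+2}(\hat X)\subset V_{s-1}(\hat X)$ provided $f_{t-s+2}\neq 1/|\hat X|$ (Lemma \ref{lemma1}); and that inequality on $f_{t-s+2}$ is precisely where strength exactly $t$ enters, via $\mathbf{H}_{t+1}(\hat X)^{\rm T}\mathbf{H}_0(\hat X)\neq\mathbf 0$ (Lemma \ref{lemma3}). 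Dropping the index $t-s+2$ from the dimension count then gives $|\hat X|\le\binom{d+s-2}{s-1}-h_{t-s+2}$, hence the theorem. So $h_{t-s+2}$ appears as the rank bound of a characteristic matrix whose positive eigenspace is absorbed into another layer, not as a count of admissible LP perturbations; the ``main obstacle'' you flag is not the right object, and I do not see how to reach the stated bound without replacing the LP framework by this rank/eigenspace argument.
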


We next consider to estimate $|X|$ for the case when $s=\frac{t+3}{2}$. We mainly focus on two special cases : $s=3,\ t=3$ and $s=4,\ t=5$.

\subsubsection{The case: $s=3,\ t=3$}

As mentioned before, a set $\Phi$ is an ETF for $\R^d$ with size $n<\frac{d(d+1)}{2}$
if and only if $\Phi\cup-\Phi$ is an antipodal $3$-distance sets with strength $3$ \cite[Example 8.3]{Delsarte}.
Hence, we direct our attention to estimating the size of real ETFs. The following
theorem presents a necessary condition for  the size of real ETFs.

\begin{theorem}\label{coro1}
Let $d\geq5$ be an integer. Assume that $\Phi$ is an ETF for $\R^d$ with size
$n>d+1$. Then, we have either $n\in\{d+\frac12+\sqrt{2d+\frac14},\frac{d(d+1)}{2}\}$
or
\begin{equation}\label{g}
d+\frac12+\sqrt{3d+\frac14}\leq n\leq \frac{d(d+2)}{3}.
\end{equation}
\end{theorem}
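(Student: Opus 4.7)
The plan is a case analysis based on the strength of $X := \Phi\cup(-\Phi)$ together with its Naimark complement, combined with one new upper-bound lemma for the strength-$3$ case.

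By \cite[Example 8.3]{Delsarte}, $X\subset\Sd$ is an antipodal $3$-distance set whose strength is odd and at most $2s-1=5$, so the strength $t_\Phi\in\{3,5\}$. If $t_\Phi=5$, then $X$ is a tight spherical design and equality in \eqref{absolute1} forces $|X|=2\binom{d+1}{2}$, yielding the second exceptional value $n=d(d+1)/2$. So I may assume $t_\Phi=3$; the key technical step is the new bound $n\leq d(d+2)/3$.

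For this key step I would apply a Delsarte/Nozaki--Suda-type linear-programming argument \cite{Nozaki}: choose an annihilator polynomial $F(x)\in\R[x]$ with $F(-1)=F(\pm\alpha)=0$ (where $\alpha^2=(n-d)/[d(n-1)]$ is the Welch-tight value) and Gegenbauer expansion $F=\sum_k f_k G_k^{(d)}$ having $f_k\geq 0$ for $k\geq 4$. The $3$-design identity $\sum_{x,y}G_k^{(d)}(\langle x,y\rangle)=0$ for $1\leq k\leq 3$ together with $\sum_{x,y}G_k^{(d)}(\langle x,y\rangle)\geq 0$ for $k\geq 4$ then gives $|X|F(1)=\sum_{x,y\in X}F(\langle x,y\rangle)\geq f_0|X|^2$, i.e., $2n\leq F(1)/f_0$. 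The two naive choices $F(x)=(x+1)(x^2-\alpha^2)$ and $F(x)=(x^2-\alpha^2)^2$ both give back only the Delsarte--Goethals--Seidel bound $n\leq d(d+1)/2$; obtaining the sharper $n\leq d(d+2)/3$ requires a higher-degree $F$, and my ansatz is an even polynomial of degree $6$ of the form $F(x)=(x^2-\alpha^2)\,Q(x^2)$, with the quadratic $Q$ fixed by imposing (i) a double zero of $F$ at $x^2=\alpha^2$ and (ii) vanishing of the $G_2^{(d)}$-coefficient of $F$ (which is free thanks to the strength-$3$ condition); a Gegenbauer-recursion computation should then reduce $F(1)/f_0$ to exactly $2d(d+2)/3$ after substituting the Welch value of $\alpha$.

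The lower-bound half of the theorem then comes for free by Naimark duality. The complement $\Phi^{\perp}$ is a real ETF in $\R^{n-d}$ with $n$ vectors, and its antipodal set has strength $t_{\Phi^{\perp}}\in\{3,5\}$. If $t_{\Phi^{\perp}}=5$, \eqref{absolute1} applied to $\Phi^{\perp}$ forces $n=(n-d)(n-d+1)/2$, which rearranges to $n=d+\tfrac12+\sqrt{2d+\tfrac14}$, i.e., the Gerzon lower bound in \eqref{Gerzon}. If $t_{\Phi^{\perp}}=3$, applying the key step to $\Phi^{\perp}$ yields $n\leq (n-d)(n-d+2)/3$, rearranging to $n\geq d+\tfrac12+\sqrt{3d+\tfrac14}$. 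The four sub-cases $(t_\Phi,t_{\Phi^{\perp}})\in\{3,5\}^2$ assemble into exactly the three alternatives in the statement: $n=d+\tfrac12+\sqrt{2d+\tfrac14}$, $n=d(d+1)/2$, or the interval $[d+\tfrac12+\sqrt{3d+\tfrac14},\,d(d+2)/3]$ in \eqref{g}.

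The principal obstacle is the polynomial engineering in the key step. Any annihilator with roots at $-1,\pm\alpha$ merely reproduces the Gerzon bound, so harvesting the refined $d(d+2)/3$ requires simultaneously spending the $3$-design freedom on the $G_2^{(d)}$-coefficient and enforcing non-negativity of all higher Gegenbauer coefficients; the verification that the resulting ratio $F(1)/f_0$ collapses to $2d(d+2)/3$ under Welch equality is the delicate computation, and the hypothesis $d\geq 5$ presumably enters both to keep the interval \eqref{g} non-degenerate and to secure the needed positivity of those coefficients.
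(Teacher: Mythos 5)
Your case skeleton (strength of $\Phi\cup-\Phi$ is $3$ or $5$, tightness giving $n=\frac{d(d+1)}{2}$, and a Naimark-dual pass giving the lower bound from the upper bound via $m=n-d$) matches the shape of the paper's argument, but the engine you propose for the key step --- a Delsarte/linear-programming bound showing $n\leq\frac{d(d+2)}{3}$ in the strength-$3$ case --- cannot work, and this is exactly where the paper instead invokes a number-theoretic input. The paper's proof rests on the integrality theorem of Sustik--Tropp--Dhillon--Heath (Lemma \ref{aw}): both $\sqrt{\frac{d(n-1)}{n-d}}$ and $\sqrt{\frac{(n-d)(n-1)}{d}}$ must be \emph{odd integers}, which via $((2k-1)^2-d)(n-d)=d(d-1)$ forces $n=d^2$ (killed by Gerzon), $n=\frac{d(d+1)}{2}$, or $n\leq\frac{d(d+2)}{3}$, and dually for the lower bound. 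The excluded interval $\bigl(\frac{d(d+2)}{3},\frac{d(d+1)}{2}\bigr)$ is a discreteness phenomenon; positivity/design conditions alone cannot see it.

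Concretely, two things defeat your LP plan. First, your derivation uses only the $3$-design identities ($f_2$ free) and $f_k\geq0$ for $k\geq4$; the tight ETF $(d,n)=(7,28)$ with Welch value $\alpha=\tfrac13$ satisfies every one of these hypotheses (it is even a $5$-design), so a polynomial of your type with $F(1)/f_0=\frac{2d(d+2)}{3}=42$ would ``prove'' $28\leq21$ --- hence the required nonnegativity of $f_4,f_6$ cannot hold there, and you give no mechanism that makes it hold only off the tight case. Second, and decisively, the LP is feasible inside the forbidden zone: take $d=5$, $n=14$, $\alpha^2=\frac{9}{65}$ (a hypothetical ETF ruled out by Lemma \ref{aw} but not by moments). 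One computes $h_2+(n-1)G_2^{(5)}(\alpha)=0$ while $h_4+(n-1)G_4^{(5)}(\alpha)\approx7.1>0$ and $h_6+(n-1)G_6^{(5)}(\alpha)>0$; consequently, for \emph{every} even $F=f_0+f_2G_2^{(d)}+f_4G_4^{(d)}+f_6G_6^{(d)}$ with $F(\pm\alpha)=0$ and $f_4,f_6\geq0$, the linear functional $L(G_k):=n\,h_k+n(n-1)G_k^{(d)}(\alpha)$ gives $nF(1)=L(F)\geq f_0n^2$, i.e.\ $F(1)/f_0\geq14>\frac{d(d+2)}{3}=\frac{35}{3}$. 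So no degree-$6$ ansatz of the proposed shape (nor any argument using only these moment constraints) can certify $n\leq\frac{d(d+2)}{3}$; the ``strength exactly $3$'' hypothesis is an open condition and enters your inequalities nowhere quantitatively. (A smaller internal slip: an even polynomial with $F(-1)=0$ forces $F(1)=0$; one must pass to the half-set $\Phi$ with angle set $\{\pm\alpha\}$, but this repair does not change the conclusion above.) To fix the proof you need the parity/integrality input of Lemma \ref{aw}, after which the paper's short case analysis on $(2k-1)^2-d\in\{1,2,\geq3\}$ and its Naimark-dual version deliver precisely the statement.
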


 { Theorem \ref{coro1} improves the Gerzon bound (\ref{Gerzon}) when
$n\notin\{d+\frac12+\sqrt{2d+\frac14},\frac{d(d+1)}{2}\}$. To our knowledge, one only
finds two pairs $(d,n)$ for which ETFs exist and  achieve the size
$\frac{d(d+2)}{3}$: $(6,16)$ and $(22,176)$. The known configuration of these two
ETFs is a subset of ETFs with parameters $(7,28)$ and $(23,276)$, respectively (see
\cite[Remark 5.2]{Tremain} and \cite[Page 271]{Taylor}).
 }
Motivated by the observation, we present the following conjecture:
\begin{conjecture}\label{conj1}
Assume that $d\geq 5$. There exists an ETF with parameters
$(d+1,\frac{(d+1)(d+2)}{2})$ if and only if there exists an ETF with parameters
$(d,\frac{d(d+2)}{3})$.
\end{conjecture}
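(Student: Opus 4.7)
The plan is to prove the stated equivalence via the Seidel (signature) matrix formulation of real ETFs. Recall that a real ETF with parameters $(D,N)$ corresponds to an $N\times N$ symmetric $\{\pm 1\}$-matrix $S$ with zero diagonal having exactly two eigenvalues whose values and multiplicities are determined by $(D,N)$. A direct calculation shows that for $(D,N)=(d+1,\binom{d+2}{2})$ the spectrum of $S$ is $-\sqrt{d+3}$ with multiplicity $\binom{d+1}{2}$ and $\tfrac{d\sqrt{d+3}}{2}$ with multiplicity $d+1$, while for $(D,N)=(d,\tfrac{d(d+2)}{3})$ the spectrum is $-\sqrt{d+3}$ with multiplicity $\tfrac{d(d-1)}{3}$ and $\tfrac{(d-1)\sqrt{d+3}}{3}$ with multiplicity $d$. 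The coincidence of the smallest eigenvalue $-\sqrt{d+3}$ is the spectral link that drives the conjecture.

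For the forward direction I would look for a principal submatrix $S'$ of the maximal ETF's Seidel matrix $S$, of order $\tfrac{d(d+2)}{3}$, realising the spectrum of the smaller ETF. Guided by the two known instances $(6,16)\subset(7,28)$ and $(22,176)\subset(23,276)$, the candidate index set should arise by deleting a distinguished subset of $\tfrac{(d+2)(d+3)}{6}$ vertices from the regular two-graph underlying $S$. Using Cauchy interlacing together with the tightness of this complementary vertex set, I would try to force the submatrix's spectrum to be exactly $\{-\sqrt{d+3},\tfrac{(d-1)\sqrt{d+3}}{3}\}$, and then reinterpret the resulting $\{\pm 1\}$-submatrix as the Seidel matrix of the desired $(d,\tfrac{d(d+2)}{3})$-ETF.

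For the reverse direction, the goal is to extend a $(d,\tfrac{d(d+2)}{3})$-ETF to a maximal ETF in one higher dimension by adjoining $\tfrac{(d+2)(d+3)}{6}$ new vectors. A natural template is the Tremain-type construction that ties both known instances to an underlying combinatorial design; if the smaller ETF admits such a substructure one can hope to prescribe the added vectors canonically. Concretely, I would attempt to produce the extended Seidel matrix as a symmetric bordering of $S'$ whose spectrum collapses to $\{-\sqrt{d+3},\tfrac{d\sqrt{d+3}}{2}\}$ exactly, using the design structure to dictate the signs of the new rows.

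The main obstacle will be the reverse direction: no general extension principle for real ETFs is known, and maximal real ETFs are extraordinarily rare (only $D\in\{2,3,7,23\}$ are presently realized), so any argument must either extract the required combinatorial scaffolding from the smaller ETF itself or establish strong simultaneous non-existence theorems. A plausible reduction is to show that the existence of a $(d,\tfrac{d(d+2)}{3})$-ETF with $d\geq 5$ forces the strongly-regular / regular two-graph parameters of a maximal ETF in dimension $d+1$, thereby collapsing the conjecture to the classification problem for such two-graphs — which would also explain why the current list of confirmed parameters is so sparse.
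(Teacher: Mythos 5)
You are attempting to prove Conjecture \ref{conj1}, which the paper itself leaves open: it is stated purely as a conjecture motivated by the two known pairs $(6,16)\subset(7,28)$ and $(22,176)\subset(23,276)$, so there is no proof in the paper to compare against. Your spectral bookkeeping is correct (for $(d+1,\frac{(d+1)(d+2)}{2})$ the Seidel matrix has spectrum $\{-\sqrt{d+3}^{\,(\binom{d+1}{2})},\ \frac{d\sqrt{d+3}}{2}^{\,(d+1)}\}$, for $(d,\frac{d(d+2)}{3})$ it is $\{-\sqrt{d+3}^{\,(\frac{d(d-1)}{3})},\ \frac{(d-1)\sqrt{d+3}}{3}^{\,(d)}\}$, and the shared smallest eigenvalue $-\sqrt{d+3}$ is indeed the structural link), but what you have written is a research programme, not a proof, and both implications have genuine gaps.

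For the forward direction, Cauchy interlacing cannot deliver what you ask of it: deleting $\frac{(d+2)(d+3)}{6}$ rows and columns only confines the eigenvalues of the principal submatrix to the interval $[-\sqrt{d+3},\frac{d\sqrt{d+3}}{2}]$ and controls how many can exceed given thresholds; it cannot force the spectrum to collapse to the two points $\{-\sqrt{d+3},\frac{(d-1)\sqrt{d+3}}{3}\}$, nor even force the top eigenvalue to drop to the required value. That conclusion would need a distinguished vertex subset of the regular two-graph with very special regularity properties, and you give no argument that such a subset exists in an \emph{arbitrary} maximal ETF --- in the two known examples it comes from explicit combinatorial structure (Tremain/Taylor constructions), not from interlacing. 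For the reverse direction the circularity you half-acknowledge is fatal as stated: choosing $\frac{(d+2)(d+3)}{6}$ new $\pm1$ rows so that the bordered Seidel matrix has exactly two eigenvalues \emph{is} the problem of constructing the $(d+1,\frac{(d+1)(d+2)}{2})$ ETF, and no extension principle is known. Moreover, ``reducing'' the conjecture to the classification of the corresponding regular two-graphs (equivalently, tight spherical $5$-designs / maximal real ETFs, known only for $D\in\{2,3,7,23\}$ and open for $D=(2m+1)^2-2$, $m\geq 3$) is a reduction to a problem that is itself unsolved, so it does not establish either implication. In short, neither direction is proved; your correct observation about the common eigenvalue $-\sqrt{d+3}$ explains why the conjecture is plausible, but the statement remains open both in the paper and after your proposal.
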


Recall that $X\subset \Sd$ is an antipodal $3$-distance set with strength $3$ if and
only if $X=\Phi\cup -\Phi$ where $\Phi$ is an ETF in $\R^d$ with size $n<\frac{d(d+1)}{2}$. A
simple observation is $\Phi\cap -\Phi=\emptyset$ if $\Phi$ is an ETF.
 Hence, we immediately obtain
an upper  bound  for the antipodal $3$-distance sets with strength $3$.

\begin{corollary}\label{wel}
Let $d\geq5$ be an integer. Assume $X\subset\Sd$ is an antipodal  $3$-distance set
with strength $3$. Then, we have either $|X|\in\{2d+2,2d+1+\sqrt{8d+1}\}$ or
\begin{equation*}
2d+1+\sqrt{12d+1}\leq|X|\leq\frac{2d(d+2)}{3}.
\end{equation*}
\end{corollary}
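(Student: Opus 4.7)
The plan is to reduce Corollary \ref{wel} directly to Theorem \ref{coro1} via the correspondence recalled at the start of Section 1.2, without needing any new technical machinery. I would begin by invoking the characterization stated there: a set $X\subset\Sd$ is an antipodal $3$-distance set with strength exactly $3$ if and only if $X=\Phi\cup(-\Phi)$, where $\Phi$ is a real ETF for $\R^d$ with $n:=|\Phi|$ satisfying $d+1\leq n<\frac{d(d+1)}{2}$. The strict upper inequality is crucial: if $n=\frac{d(d+1)}{2}$, then $\Phi\cup(-\Phi)$ would be a tight spherical $5$-design and hence have strength $5$, not $3$.

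Next I would verify that $\Phi\cap(-\Phi)=\emptyset$, so that $|X|=2n$. Indeed, if $\boldsymbol\varphi_i=-\boldsymbol\varphi_j$ for some $i\neq j$ in $\Phi$, then $|\langle\boldsymbol\varphi_i,\boldsymbol\varphi_j\rangle|=1$, contradicting the fact that for any ETF with $n>d$ the coherence is $\mu(\Phi)=\sqrt{(n-d)/(d(n-1))}<1$. Bounding $|X|$ therefore reduces to bounding $n$.

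Now I would split into two cases according to whether $n=d+1$ or $n>d+1$. If $n=d+1$, then $\Phi$ is the simplex ETF and $|X|=2d+2$, giving the first value listed. If instead $n>d+1$, I would apply Theorem \ref{coro1}: since $n\neq \frac{d(d+1)}{2}$ by the discussion above, the only remaining options are $n=d+\tfrac12+\sqrt{2d+\tfrac14}$ or $d+\tfrac12+\sqrt{3d+\tfrac14}\leq n\leq \frac{d(d+2)}{3}$. Doubling and using the identities
\begin{equation*}
2\sqrt{2d+\tfrac14}=\sqrt{8d+1},\qquad 2\sqrt{3d+\tfrac14}=\sqrt{12d+1},
\end{equation*}
produces the two remaining possibilities $|X|=2d+1+\sqrt{8d+1}$ and $2d+1+\sqrt{12d+1}\leq|X|\leq\frac{2d(d+2)}{3}$.

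This is essentially a one-step deduction from Theorem \ref{coro1}, so no real obstacle arises. The only points requiring care are (i) excluding $n=\frac{d(d+1)}{2}$ using the strength-exactly-$3$ hypothesis, and (ii) separately accounting for the boundary case $n=d+1$, which lies outside the range $n>d+1$ covered by Theorem \ref{coro1} but is realized by the simplex ETF.
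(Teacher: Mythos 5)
Your proposal is correct and follows essentially the same route as the paper, which treats the corollary as an immediate consequence of Theorem \ref{coro1} via the correspondence $X=\Phi\cup(-\Phi)$ with $\Phi$ a real ETF of size $n<\frac{d(d+1)}{2}$ and the observation $\Phi\cap(-\Phi)=\emptyset$, so $|X|=2n$. Your added care in separating the case $n=d+1$ (giving $|X|=2d+2$) and excluding $n=\frac{d(d+1)}{2}$ by the strength-$3$ hypothesis is exactly the bookkeeping implicit in the paper's statement.
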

\begin{remark}
According to Delsarte-Goethals-Seidel bound in (\ref{absolute1}), $\abs{X}\leq
d(d+1)$ if  $X\subset\Sd$ is an antipodal  $3$-distance set with strength $3$.
 Corollary \ref{wel} lowers the bound to $\frac{2d(d+2)}{3}$.
\end{remark}
We next introduce  another result  on the existence of real ETFs. It is well known
that the existence of a real  ETF for $\R^d$ with size $n>d+1>2$ is equivalent to the
existence of a strongly regular graph with parameters
$(n-1,a,\frac{3a-n}{2},\frac{a}{2})$ \cite{ETF2,Waldron,ETF}, where
\begin{equation}\label{a}
a:=\frac{n}{2}-1+(1-\frac{n}{2d})\sqrt{\frac{d(n-1)}{n-d}}.
\end{equation}
Since every strongly regular graph satisfies the Krein  conditions (see Lemma
\ref{krein} for details), one is interested in whether the Krein conditions are
covered by the Gerzon bound (\ref{Gerzon}) or other known necessary conditions  (see
\cite{Waldron,ETF}). In Proposition \ref{pr:krein}  we will give a positive answer to
this question showing that the Krein conditions for strongly regular graphs with
parameters $(n-1,a,\frac{3a-n}{2},\frac{a}{2})$ are equivalent to the Gerzon bound
(\ref{Gerzon}).


\subsubsection{The case: $s=4,\ t=5$}

Finally, we consider to estimate the size of  antipodal $4$-distance sets with
strength $5$. {Recall that the Levenstein bound (\ref{leven}) is attained only if
$n>\frac{d(d+1)}{2}$ \cite[Theorem 6.13]{leven2}.} Moreover, $\Phi\subset\Sd$ is a
Levenstein-equality packing with size $n<\frac{d(d+1)(d+2)}{6}$ if and only if
$\Phi\cup-\Phi$ is an antipodal $4$-distance set with strength $5$ \cite[Example
8.4]{Delsarte}. Thus, we mainly focus on estimating the size of Levenstein-equality
packings.

We begin with providing an estimate on the size of  Levenstein-equality packings,
i.e., $\mu(\Phi)=\sqrt{\frac{3n-d(d+2)}{(d+2)(n-d)}}$.

\begin{theorem}\label{lev}
Let $d\geq4$ be an integer. Assume $\Phi\subset\Sd$ is  a Levenstein-equality packing
with size $n$.  { Then we have either $n=\frac{d(d+2)}{2}$ or
\begin{equation}\label{eq:nalpha1}
n\in \left\{\frac{d(d+2)(d-1+\alpha)}{3\alpha} : \alpha \in [2,\frac{2(d-1)(d+2)}{d+5}]\cap \Z\right\}\cap \Z.
\end{equation}}
Particularly, if $\Phi\subset\Sd$ is  a Levenstein-equality packing with size $n\notin\{
\frac{d(d+2)}{2},\frac{d(d+1)(d+2)}{6}\}$  then we have $n\in [\frac{d(d+3)}{2},\frac{d(d+2)^2}{9}]$.
\end{theorem}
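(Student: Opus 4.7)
The approach exploits the fact that $X:=\Phi\cup(-\Phi)$ is an antipodal spherical $5$-design with $|X|=2n$ and four distinct angles, necessarily $A(X)=\{-1,0,\beta,-\beta\}$, where $\beta^2=(3n-d(d+2))/((d+2)(n-d))$ by the Levenstein-equality assumption. Since $X$ is a $1$-design, $\Phi$ is a tight frame and its Gram matrix $G$ satisfies $G^2=(n/d)\,G$. Writing $G=I_n+\beta S$ where $S\in\{0,\pm 1\}^{n\times n}$ is the symmetric matrix encoding the off-diagonal signs, this identity becomes
\begin{equation*}
S^2 \,=\, \frac{n-d}{d\beta^2}\,I_n + \frac{n-2d}{d\beta}\,S,
\end{equation*}
so $S$ has exactly two distinct eigenvalues, $\lambda_1=(n-d)/(d\beta)$ and $\lambda_2=-1/\beta$, of multiplicities $d$ and $n-d$, respectively.

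Since $S$ is a symmetric integer matrix, $\lambda_1,\lambda_2$ are algebraic integers; they could fail to be rational only when their multiplicities agree ($n=2d$), but that forces $\beta^2=(4-d)/(d+2)\leq 0$ for $d\geq 4$, a contradiction. Hence both eigenvalues lie in $\Z$. Set $m:=1/\beta\in\Z_{\geq 2}$. Substituting $\beta=1/m$ into the Levenstein identity and introducing $\alpha:=3m^2-d-2$ yields $n=d(d+2)(d-1+\alpha)/(3\alpha)$ with $\alpha$ a positive integer. The exceptional case $m^2=d$ gives $\alpha=2(d-1)$ and $n=d(d+2)/2$, which is the first alternative in the theorem.

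It remains to restrict $\alpha$. The bound $\alpha\geq 2$ is equivalent to the Delsarte-Goethals-Seidel bound $n\leq d(d+1)(d+2)/6$; the strict Levenstein condition $n>d(d+1)/2$ gives $\alpha<2(d+2)$; one upgrades this to $\alpha\leq 2(d-1)(d+2)/(d+5)$, equivalently $n\geq d(d+3)/2$, by ruling out every integer $\alpha$ in the gap $(2(d-1)(d+2)/(d+5),2(d+2))\setminus\{2(d-1)\}$. In terms of $m$, this gap becomes $m^2\in((d+1)(d+2)/(d+5),d+2)\setminus\{d\}$; since $(d+1)(d+2)/(d+5)>d-2$, the only integer squares in it are $m^2\in\{d-1,d+1\}$. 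The subcase $m^2=d+1$ makes $n=d^2(d+2)/(2d+1)$ non-integer for $d\geq 2$ (using $\gcd(2d+1,d^2)=1$); the subcase $m^2=d-1$ yields $\lambda_1=m^3/(2m^2-3)$, and an elementary gcd argument splitting on whether $3\mid m$ shows $(2m^2-3)\nmid m^3$ for every integer $m\geq 2$. This gap analysis is the main technical obstacle.

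The particular statement then follows by monotonicity: since $n$ is strictly decreasing in $\alpha$, excluding $\alpha=2$ (which gives $n=d(d+1)(d+2)/6$) forces $\alpha\geq 3$, so $n\leq d(d+2)^2/9$; combined with $n\geq d(d+3)/2$, this yields $n\in[d(d+3)/2,d(d+2)^2/9]$.
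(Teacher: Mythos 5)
Your proposal is correct in substance, but it reaches the crucial inequality $n\geq \frac{d(d+3)}{2}$ by a genuinely different route than the paper. The first half coincides with the paper's argument: your eigenvalue/algebraic-integer analysis of $S=(G-\mathbf{I})/\beta$ is exactly the paper's Lemma 5.2 (integrality of $\frac{1}{\alpha_{n,d}}$ and $\frac{n-d}{d\,\alpha_{n,d}}$), and the parametrization $n=\frac{d(d+2)(d-1+\alpha)}{3\alpha}$ with $\alpha=3m^2-d-2$, together with the exclusion of $\alpha=1$ via the Delsarte--Goethals--Seidel bound, is identical. Where you diverge is the upper bound on $\alpha$: the paper invokes Neumaier's result that $\Phi$ generates a strongly regular graph, takes the spherical embedding of that graph into $\mathbb{S}^{n-\frac{d(d+1)}{2}-1}$, and applies the two-distance DGS bound to get $n\geq\frac{d(d+3)}{2}$ directly; you instead observe that the offending range of $\alpha$ corresponds to $m^2\in\{d-1,d,d+1\}$ and kill $m^2=d+1$ by integrality of $n$ and $m^2=d-1$ by integrality of the eigenvalue $\lambda_1$. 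Your route is more elementary (no strongly regular graphs or embeddings) and in the $m^2=d-1$ case proves slightly more than the theorem needs (e.g.\ it eliminates $(d,n)=(5,21)$, where $n$ is an integer, so the eigenvalue integrality is genuinely required there); the paper's embedding argument is shorter for the lower bound and ties into the SRG machinery it uses anyway. Two slips to fix, neither fatal: tightness of $\Phi$ follows from $X=\Phi\cup(-\Phi)$ being a spherical $2$-design (an antipodal set is automatically a $1$-design, which does not imply tightness), so cite the $5$-design property rather than the $1$-design property; and in the subcase $m^2=d-1$ one has $n-d=\frac{d(d-1)^2}{2d-5}$, hence $\lambda_1=\frac{(n-d)m}{d}=\frac{m^5}{2m^2-3}$ rather than $\frac{m^3}{2m^2-3}$ --- your gcd argument (splitting on $3\mid m$) applies verbatim to any power of $m$, so the conclusion $(2m^2-3)\nmid m^5$ and the contradiction still stand.
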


\begin{remark}\label{re:tight}
Taking $\alpha=3$ in (\ref{eq:nalpha1}), we have $n=\frac{d(d+2)^2}{9}$. {To our
knowledge, so far, one only finds two pairs $(d,n)$ for which Levenstein-equality
packings achieve the size $\frac{d(d+2)^2}{9}$: $(7,63)$ and $(22,1408)$. The
corresponding packings come from a tight spherical $7$-design in $\R^8$ and
$\R^{23}$, respectively \cite[Page 619]{Munemasa}. We next explain the link between
tight spherical $7$-designs and Levenstein-equality packings. Assume that
$X\subset\R^{d+1}$ is a tight spherical $7$-design.
 According to
\cite[Theorem 8.2]{Delsarte},  for each $\vx\in X$, the derived code
$Z:=\{\vy\in X: \langle \vy,\vx \rangle=0 \}$ forms an antipodal 4-distance
$5$-strength set in $\R^d$ with the angle set $\{-1,0,\pm\sqrt{\frac{3}{d+5}}\}$.
Hence, choosing a point from each antipodal pair of points in $Z$ gives a
Levenstein-equality packing $\Phi\subset\Sd$. Combining
$\mu(\Phi)=\sqrt{\frac{3}{d+5}}$ with (\ref{leven}) we obtain that $\Phi$ has size
$\frac{d(d+2)^2}{9}$.}  Hence, if we have a tight spherical 7-design in $\R^{d+1}$,
we can obtain a Levenstein-equality packings in $\R^d$ with size
$\frac{d(d+2)^2}{9}$. We are interested in knowing whether each Levenstein-equality
packing with size $\frac{d(d+2)^2}{9}$ comes from  a tight spherical $7$-design.
\end{remark}

Remind that $X\subset \Sd$ is an antipodal $4$-distance set with strength $5$ if and
only if $X=\Phi\cup -\Phi$ where $\Phi$ is a Levenstein-equality packing in $\R^d$
with size $n<\frac{d(d+1)(d+2)}{6}$.  Based on Theorem \ref{lev}, we have the
following corollary:

\begin{corollary}\label{coro2}
Let $d\geq4$ be an integer. Assume $X\subset\Sd$ is an antipodal $4$-distance set
with strength $5$.  Then, we have either $|X|=d(d+2)$ or
\begin{equation}\label{s4t5}
\abs{X}\,\,\in\,\, \left\{\frac{2d(d+2)(d-1+\alpha)}{3\alpha} : \alpha \in [3,\frac{2(d-1)(d+2)}{d+5}]\cap \Z\right\}\cap \Z.
\end{equation}
\end{corollary}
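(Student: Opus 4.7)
The plan is to reduce Corollary \ref{coro2} directly to Theorem \ref{lev} via the antipodal--packing correspondence already recalled in the paragraph preceding the statement. Specifically, \cite[Example 8.4]{Delsarte} gives that $X \subset \Sd$ is an antipodal $4$-distance set with strength $5$ if and only if $X = \Phi \cup (-\Phi)$ for some Levenstein-equality packing $\Phi \subset \Sd$ of size $n < \frac{d(d+1)(d+2)}{6}$. A short algebraic check shows that any Levenstein-equality packing with $n > d$ has coherence $\mu(\Phi) = \sqrt{\frac{3n-d(d+2)}{(d+2)(n-d)}} < 1$, so $\Phi \cap (-\Phi) = \emptyset$ and hence $|X| = 2n$. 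Thus the task reduces to translating the admissible values of $n$ supplied by Theorem \ref{lev} into values of $|X|$.

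Next, I would apply Theorem \ref{lev} to $\Phi$: it yields either $n = \frac{d(d+2)}{2}$, which doubles to the first alternative $|X| = d(d+2)$, or
\[
n \in \left\{\frac{d(d+2)(d-1+\alpha)}{3\alpha} : \alpha \in \left[2,\tfrac{2(d-1)(d+2)}{d+5}\right] \cap \Z\right\} \cap \Z.
\]
Multiplying by $2$ produces exactly the family appearing in (\ref{s4t5}), with the single discrepancy that the lower endpoint of the range of $\alpha$ is $2$ rather than $3$. So it only remains to rule out $\alpha = 2$.

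For this, a direct substitution gives $\frac{d(d+2)(d-1+2)}{3\cdot 2} = \frac{d(d+1)(d+2)}{6}$, which is precisely the threshold at which the antipodal--packing correspondence produces a tight spherical $7$-design and hence a set of strength $7$, not $5$. Since the strict inequality $n < \frac{d(d+1)(d+2)}{6}$ is forced by the hypothesis that $X$ has strength exactly $5$, the value $\alpha = 2$ is excluded and we obtain $\alpha \in [3, \tfrac{2(d-1)(d+2)}{d+5}] \cap \Z$, matching (\ref{s4t5}). No real obstacle is anticipated: the whole argument is bookkeeping on top of Theorem \ref{lev}, and the only subtle point is the boundary exclusion at $\alpha = 2$, which amounts to identifying this single value with the tight $7$-design regime that lies outside the hypothesis.
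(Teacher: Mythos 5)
Your proposal is correct and follows essentially the same route the paper intends: the paper states the corollary as an immediate consequence of Theorem \ref{lev} via the correspondence $X=\Phi\cup(-\Phi)$ with $|X|=2n$ and $n<\frac{d(d+1)(d+2)}{6}$, and your exclusion of $\alpha=2$ (which forces $n=\frac{d(d+1)(d+2)}{6}$, the tight $7$-design regime) is exactly the step implicitly used to shift the range of $\alpha$ from $[2,\frac{2(d-1)(d+2)}{d+5}]$ to $[3,\frac{2(d-1)(d+2)}{d+5}]$.
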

\begin{remark}
According to (\ref{s4t5}), we obtain that  $\abs{X}\leq \frac{2d(d+2)^2}{9}$ if
 $X\subset\Sd$ is an antipodal $4$-distance set
with strength $5$.
 Recall that the Delsarte-Goethals-Seidel bound in (\ref{absolute1}) gives $|X|\leq
\frac{d(d+1)(d+2)}{3}$ when $s=4$, and that Nozaki-Suda bound in (\ref{odds}) gives
$|X|\leq \frac{1}{12}\cdot{(d+2)(d^3+4d^2-9d+12)}$ when $s=4, t=5$. A simple calculation shows
that
\begin{equation*}
\frac{2d(d+2)^2}{9}<\frac{d(d+1)(d+2)}{3}<\frac{(d+2)(d^3+4d^2-9d+12)}{12}, \quad d\geq 4.
\end{equation*}
Hence, the upper bound $\abs{X}\leq \frac{2d(d+2)^2}{9}$ improves both Nozaki-Suda bound and
the Delsarte-Goethals-Seidel bound when $d\geq 4$.
\end{remark}

In Table \ref{tab1} we summarize  the best known upper bounds on the antipodal
$s$-distance $t$-strength sets so far.

\makeatletter\def\@captype{table}\makeatother
\begin{center}
\caption {Upper bounds on the size of an antipodal $s$-distance set $X\subset\Sd$
with strength $t$ } \label{tab1}
\begin{tabular}{c|cllllllll}
The values of $s$ and $t$    & \multicolumn{9}{c}{An upper bound of $|X|$}
\\ \hline
       $s=3,t=3$    & \multicolumn{9}{c}{$\frac{2d(d+2)}{3}$ (Corollary \ref{wel})}              \\ \hline
$s=4,t=5$    & \multicolumn{9}{c}{$\frac{2d(d+2)^2}{9}$\ (Corollary \ref{coro2})}    \\ \hline

even $s=\frac{t+3}{2}$, $t\geq7$ & \multicolumn{9}{c}{$\min\{2\binom{d+s-2}{s-1}, 2\binom{d+s-1}{s}-2h_{t-s+1}\}$ (\cite{Delsarte} ,\cite{Nozaki})}     \\ \hline

even $s\in[\frac{t+5}{2},t+1]$, $t\geq 3$ &
\multicolumn{9}{c}{$2\binom{d+s-2}{s-1}-2h_{t-s+2}$ (Theorem \ref{general})}     \\
\hline
    odd $s\in[\frac{t+5}{2},t+2]$  & \multicolumn{9}{c}{$2\binom{d+s-2}{s-1}-2h_{t-s+2}$ \cite{Nozaki}}
\end{tabular}
\end{center}
\subsection{Organization}

The paper is organized as follows. In Section \ref{sec2},
 we introduce some definitions and lemmas.
 After presenting the proof of  Theorem \ref{general} in Section \ref{s3},
 we prove Theorem \ref{coro1} in Section \ref{s4}.
We also show the equivalence between the Gerzon bound and the  necessary conditions
on the existence of real ETFs obtained from the Krein conditions
 in Section \ref{s4}. Finally we prove Theorem \ref{lev} in Section
\ref{s5}.


\section{Preliminaries}\label{sec2}

In this section, we introduce some definitions and lemmas which will be used in later
sections.

\subsection{Notations}\label{s21}

Let $\text{Harm}_{k}(\R^d)$ be the  vector space of all real homogeneous harmonic
polynomials of degree $k$ on $d$ variables, equipped with the standard inner product
\begin{equation*}
\langle f,g\rangle=\int_{\Sd}f(\vx)g(\vx)\text{d}\mu_d(\vx)
\end{equation*}
for $f,g\in\text{Harm}_{k}(\R^d)$. It is known that the dimension of $\text{Harm}_{k}(\R^d)$ is $h_k$, where $h_k$ is defined in (\ref{hi}) for each $k\geq 0$ \cite[Theorem 3.2]{Delsarte}.  Let $\{\phi_{k,i}^{(d)}\}_{i=1}^{h_k}$ be an orthonormal basis for $\text{Harm}_{k}(\R^d)$.

Let $G_k^{(d)}(x)$ denote the
Gegenbauer polynomial of degree $k$ with the normalization $G_k^{(d)}(1)=h_k$, which can be
defined recursively as follows (see also \cite[Definition 2.1]{Delsarte}):
\begin{equation*}
G_0^{(d)}(x):=1,\ G_1^{(d)}(x):=d\cdot x,
\end{equation*}
\begin{equation*}
\frac{k+1}{d+2k}\cdot G_{k+1}^{(d)}(x)=x\cdot G_k^{(d)}(x)-\frac{d+k-3}{d+2k-4}\cdot G_{k-1}^{(d)}(x),\ k\geq 1.
\end{equation*}
The following formulation  is  well-known  \cite[Theorem 3.3]{Delsarte}:
\begin{equation}\label{addition}
G_k^{(d)}(\langle \vx,\vy\rangle)=\sum\limits_{i=1}^{h_k}\phi_{k,i}^{(d)}(\vx)\phi_{k,i}^{(d)}(\vy),\
 \text{ for }\ \vx,\vy\in\Sd,\  k\in\mathbb{Z}_+.
\end{equation}

We also need the following notations.

\begin{definition}\label{alldef}
For a finite non-empty set $X\subset\Sd$,  we use the following notations:

\begin{enumerate}[{\rm (i)}]

\item The $k$-th characteristic matrix $\mathbf{H}_k(X)$ of size $|X|\times h_k$ is defined as  (see also \cite[Definition 3.4]{Delsarte}):
\begin{equation*}
\mathbf{H}_k(X):=(\phi_{k,i}^{(d)}(\vx)), \ \vx\in X,\ i\in\{1,2,\ldots,h_k\};
\end{equation*}

\item\label{ii}  Set $\mathbf{D}_k(X):=\mathbf{H}_k(X)\mathbf{H}_k(X)^{\rm T}$
    for each $k\geq 0$;

\item Let $V_k(X)$ denote the direct sum of the eigenspaces corresponding to all positive eigenvalues of $\mathbf{D}_k(X)$ (see also \cite[Page 1707]{Nozaki});

\item The annihilator polynomial of  $X$ is defined as
\begin{equation*}
F_{X}(x):=\prod\limits_{\alpha\in A(X)}\frac{x-\alpha}{1-\alpha};
\end{equation*}

\item\label{vii} When $X$ is antipodal, we say the subset $\hat{X}\subset X$ is a half of $X$ if $\hat{X}$ satisfies $\hat{X}\cap-\hat{X}=\varnothing$ and $ \ \hat{X}\cup-\hat{X}=X$.
\end{enumerate}
\end{definition}

Note that $\mathbf{H}_0(X)$ is exactly the all-ones vector of size $|X|$.  According
to  (\ref{addition}), we have
\begin{equation}\label{D}
\mathbf{D}_k(X)=\mathbf{H}_k(X)\mathbf{H}_k(X)^{\rm T}=(G_k^{(d)}(\langle \vx,\vy\rangle))_{\vx,\vy\in X}.
\end{equation}

Throughout this paper, we use $\mathbf{I}, \mathbf{J}$ to  denote the identity matrix
and all-ones matrix of appropriate size, respectively. We also set
\begin{equation*}
\boldsymbol\Delta_{k,l}:=\left\{
        \begin{array}{cl}
        \mathbf{I}, & {\rm if}\ k=l,\\
          \mathbf{0}, & {\rm otherwise}.
        \end{array}
      \right.
\end{equation*}

\subsection{Spherical designs}\label{s22}

By the notion of characteristic matrices, the following lemma provides two equivalent definitions of spherical $t$-designs.

\begin{lemma}{\rm (see \cite[Theorem 5.3]{Delsarte}) }\label{hk}
A finite set $X\subset\Sd$ is a spherical $t$-design if and only if any one of  the following holds:
\begin{enumerate}[{\rm (i)}]
\item $\mathbf{H}_k(X)^{\rm T}\mathbf{H}_0(X)=\mathbf{0}_{h_k\times 1},\ k=1,2,\ldots,t$.

\item $\mathbf{H}_k(X)^{\rm T}\mathbf{H}_l(X)=|X|\cdot \mathbf \Delta_{k,l}$ when $0\leq k+l\leq t$.

\end{enumerate}
\end{lemma}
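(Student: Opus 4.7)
My plan is to use the orthogonal decomposition of polynomials on the sphere into spherical harmonics and then translate the design condition into vanishing sums of harmonic polynomials. The key tool is that any polynomial of degree at most $t$ restricted to $\Sd$ can be written as $\sum_{k=0}^{t}\psi_k$ with $\psi_k\in\text{Harm}_k(\R^d)$, and harmonic polynomials of positive degree integrate to zero against $\mu_d$ while harmonics of different degrees are $L^2(\mu_d)$-orthogonal. These facts are standard from the representation theory of $O(d)$ on $\Sd$; I would cite them rather than reprove them.

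First, I would establish the intermediate statement that $X$ is a spherical $t$-design if and only if $\sum_{\vx\in X}\phi(\vx)=0$ for every $\phi\in\text{Harm}_k(\R^d)$ with $1\leq k\leq t$. The forward direction is immediate: apply the design identity to $\phi$, whose integral over $\Sd$ vanishes. The converse follows by writing any polynomial $f$ of degree $\leq t$ as $\sum_{k=0}^{t}\psi_k$; the $k=0$ part reproduces the integral $\int_{\Sd}f\,d\mu_d$, and by hypothesis all other terms contribute zero to $\frac{1}{|X|}\sum_{\vx\in X}f(\vx)$.

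Next I would translate this into statement (i). The $i$-th entry of the column vector $\mathbf{H}_k(X)^{\T}\mathbf{H}_0(X)$ is $\sum_{\vx\in X}\phi_{k,i}^{(d)}(\vx)$, so the intermediate statement, applied across the orthonormal basis $\{\phi_{k,i}^{(d)}\}_{i=1}^{h_k}$ and extended by linearity, is precisely (i). This gives the equivalence of (i) with the $t$-design property.

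Finally, I would prove $(i)\Leftrightarrow(ii)$. The implication $(ii)\Rightarrow(i)$ is trivial: take $l=0$, noting $\boldsymbol\Delta_{k,0}=\mathbf{0}$ for $k\geq 1$. For $(i)\Rightarrow(ii)$, fix $k,l$ with $k+l\leq t$ and consider the $(i,j)$ entry of $\mathbf{H}_k(X)^{\T}\mathbf{H}_l(X)$, namely $\sum_{\vx\in X}\phi_{k,i}^{(d)}(\vx)\phi_{l,j}^{(d)}(\vx)$. Since the product $\phi_{k,i}^{(d)}\phi_{l,j}^{(d)}$ is a polynomial of degree $k+l\leq t$ and (i) is equivalent to the $t$-design property, this sum equals $|X|\int_{\Sd}\phi_{k,i}^{(d)}\phi_{l,j}^{(d)}\,d\mu_d$; by the orthonormality of $\{\phi_{k,i}^{(d)}\}_{i=1}^{h_k}$ and the orthogonality of harmonics of distinct degrees, this evaluates to $|X|\cdot\delta_{k,l}\delta_{i,j}$, which matches the $(i,j)$ entry of $|X|\cdot\boldsymbol\Delta_{k,l}$.

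The only subtle step is the intermediate characterization, which ultimately rests on the Gegenbauer/harmonic decomposition for polynomials on $\Sd$; everything else is bookkeeping with the characteristic matrices. I do not anticipate a serious obstacle, since the lemma is a direct reformulation of the classical Delsarte--Goethals--Seidel characterization of spherical designs in the language of the matrices $\mathbf{H}_k(X)$.
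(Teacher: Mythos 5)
Your argument is correct and is exactly the classical harmonic-decomposition proof underlying the cited result: the paper itself gives no proof of this lemma, simply quoting \cite[Theorem 5.3]{Delsarte}, and your reduction to the vanishing of $\sum_{\vx\in X}\phi(\vx)$ for $\phi\in\mathrm{Harm}_k(\R^d)$, $1\leq k\leq t$, followed by the entrywise identification of $\mathbf{H}_k(X)^{\T}\mathbf{H}_l(X)$ with $|X|\int_{\Sd}\phi_{k,i}^{(d)}\phi_{l,j}^{(d)}\,d\mu_d$ when $k+l\leq t$, is precisely how that theorem is established. No gaps; the only external inputs (decomposition of polynomials on $\Sd$ into harmonics and orthogonality of harmonics of distinct degrees) are standard and appropriately cited rather than reproved.
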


We next prove some properties of antipodal spherical designs which will be used in
Section \ref{s3}.

\begin{corollary}\label{coro-anti}
Assume $X\subset\Sd$ is an antipodal set and let $\hat{X}$ be a half of $X$. Then,
\begin{enumerate}[{\rm (i)}]
\item  $X$ is a spherical $t$-design if and only if $\mathbf{H}_k(\hat{X})^{\rm T}\mathbf{H}_0(\hat{X})=\mathbf{0}_{h_k\times 1}$ for each positive even integer $k\leq t$.

\item if $X$ is a spherical $t$-design, then
\begin{equation*}
\mathbf{H}_k(\hat{X})^{\rm T}\mathbf{H}_l(\hat{X})=\frac{|X|}{2}\cdot \mathbf\Delta_{k,l}\quad \text{and}\quad \mathbf{D}_k(\hat{X})\mathbf{D}_l(\hat{X})=\frac{|X|}{2}\cdot \mathbf\Delta_{k,l}\cdot \mathbf{D}_k(\hat{X})
\end{equation*}
hold when $0\leq k+l\leq t$ and $k\equiv l\ {\rm{(mod\ 2)}}$.
\end{enumerate}
\end{corollary}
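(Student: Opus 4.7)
The plan is to leverage the parity of homogeneous harmonic polynomials. Every $\phi\in\text{Harm}_k(\R^d)$ is homogeneous of degree $k$, so $\phi(-\vx)=(-1)^k\phi(\vx)$ for all $\vx\in\Sd$. Writing $X=\hat{X}\cup(-\hat{X})$ with $\hat{X}\cap(-\hat{X})=\varnothing$, and ordering the rows of $\mathbf{H}_k(X)$ so that the first $|X|/2$ correspond to $\hat{X}$ and the last $|X|/2$ to $-\hat{X}$, I can express the characteristic matrix in block form as
\begin{equation*}
\mathbf{H}_k(X)=\begin{pmatrix}\mathbf{H}_k(\hat{X})\\ (-1)^k\mathbf{H}_k(\hat{X})\end{pmatrix}.
\end{equation*}
A direct block multiplication then gives the core identity
\begin{equation*}
\mathbf{H}_k(X)^{\T}\mathbf{H}_l(X)=\bigl(1+(-1)^{k+l}\bigr)\,\mathbf{H}_k(\hat{X})^{\T}\mathbf{H}_l(\hat{X}),
\end{equation*}
which vanishes automatically whenever $k+l$ is odd, and equals $2\mathbf{H}_k(\hat{X})^{\T}\mathbf{H}_l(\hat{X})$ when $k+l$ is even.

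For part (i), I would combine this identity with Lemma \ref{hk}(i). For odd $k\leq t$ the equation $\mathbf{H}_k(X)^{\T}\mathbf{H}_0(X)=\mathbf{0}$ is automatic from the parity identity, while for every even positive $k\leq t$ it is equivalent to $\mathbf{H}_k(\hat{X})^{\T}\mathbf{H}_0(\hat{X})=\mathbf{0}$. Since Lemma \ref{hk}(i) characterises spherical $t$-designs by the vanishing of $\mathbf{H}_k(X)^{\T}\mathbf{H}_0(X)$ for $k=1,\ldots,t$, the biconditional in (i) follows immediately.

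For part (ii), assume $X$ is a spherical $t$-design and take $k,l$ with $k+l\leq t$ and $k\equiv l\pmod 2$. Lemma \ref{hk}(ii) gives $\mathbf{H}_k(X)^{\T}\mathbf{H}_l(X)=|X|\,\mathbf\Delta_{k,l}$, and the block identity (now with the factor $2$, since $k+l$ is even) yields $\mathbf{H}_k(\hat{X})^{\T}\mathbf{H}_l(\hat{X})=\tfrac{|X|}{2}\mathbf\Delta_{k,l}$. The second identity then follows by sandwiching,
\begin{equation*}
\mathbf{D}_k(\hat{X})\mathbf{D}_l(\hat{X})=\mathbf{H}_k(\hat{X})\bigl(\mathbf{H}_k(\hat{X})^{\T}\mathbf{H}_l(\hat{X})\bigr)\mathbf{H}_l(\hat{X})^{\T}=\tfrac{|X|}{2}\mathbf\Delta_{k,l}\,\mathbf{D}_k(\hat{X}),
\end{equation*}
which matches the stated answer in both cases $k=l$ and $k\neq l$. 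The argument is really bookkeeping rather than hard analysis; the only subtle point worth flagging is that the hypothesis $k\equiv l\pmod 2$ in (ii) is exactly what is needed to avoid the automatic cancellation in the parity identity and to deliver the surviving factor of $\tfrac12$.
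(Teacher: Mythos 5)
Your proposal is correct and follows essentially the same route as the paper: the parity identity $\phi_{k,i}^{(d)}(-\vx)=(-1)^k\phi_{k,i}^{(d)}(\vx)$, the block decomposition of $\mathbf{H}_k(X)$ over $\hat{X}\cup(-\hat{X})$, an appeal to Lemma \ref{hk}, and the same sandwiching step for the $\mathbf{D}_k$ identity. No gaps; the only cosmetic difference is that you package the even/odd case distinction into the single factor $1+(-1)^{k+l}$, which the paper handles case by case.
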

\begin{proof}
For any $\vx\in\Sd$ and $i\in\{1,2,\ldots,h_k\}$, we have
$\phi_{k,i}^{(d)}(-\vx)=-\phi_{k,i}^{(d)}(\vx)$ if $k$ is odd and
$\phi_{k,i}^{(d)}(-\vx)=\phi_{k,i}^{(d)}(\vx)$ if $k$ is even.  Hence, we have
\begin{equation}\label{eq:Hk}
\mathbf{H}_{k}(X) =
\begin{pmatrix}
\mathbf{H}_k(\hat{X})    \\
(-1)^k\cdot \mathbf{H}_k(\hat{X})  \end{pmatrix}.
\end{equation}

(i) According to (\ref{eq:Hk}) we have
\begin{equation*}
\mathbf{H}_{k}(X)^{\rm T}\mathbf{H}_{0}(X)=\left\{
        \begin{array}{ll}
        2\cdot \mathbf{H}_{k}(\hat{X})^{\rm T}\mathbf{H}_{0}(\hat{X}), & \text{if $k$ is even,}\\
        \mathbf{0}_{h_k\times 1}, & \text{if $k$ is odd.}
        \end{array}
      \right.
\end{equation*}
Based on Lemma \ref{hk} we obtain that $X$ is a spherical $t$-design if and only if
$\mathbf{H}_k(\hat{X})^{\rm T}\mathbf{H}_0(\hat{X})=\mathbf{0}_{h_k\times 1}$ for
each positive even integer $k\leq t$.

(ii) Let $k$ and $l$ be two integers satisfying $0\leq k+l\leq t$  and $k\equiv l\
{\rm{(mod\ 2)}}$. Equation (\ref{eq:Hk}) implies  $\mathbf{H}_{k}(X)^{\rm
T}\mathbf{H}_{l}(X)=2\cdot \mathbf{H}_{k}(\hat{X})^{\rm T}\mathbf{H}_{l}(\hat{X})$.
Thus, according to  Lemma \ref{hk} we obtain that $\mathbf{H}_k(\hat{X})^{\rm
T}\mathbf{H}_l(\hat{X})=\frac{|X|}{2}\cdot \mathbf\Delta_{k,l}$ if $X$ is a spherical
$t$-design. According to (\ref{ii}) in Definition \ref{alldef}, we have
\[
\mathbf{D}_k(\hat{X})\mathbf{D}_l(\hat{X})=\mathbf{H}_k(\hat{X})\mathbf{H}_k(\hat{X})^T
\mathbf{H}_l(\hat{X})\mathbf{H}_l(\hat{X})^T= \frac{|X|}{2}\cdot
\mathbf\Delta_{k,l}\cdot \mathbf{D}_k(\hat{X}).
\]
\end{proof}

The following lemma played a key role in Nozaki and Suda's framework  \cite{Nozaki}. Its
main idea is to identify the size of an $s$-distance set $X$ with the dimension of a sum of subspaces
$V_k(X)$ defined in Definition \ref{alldef}.

\begin{lemma}{\rm (see \cite[Lemma 3.2]{Nozaki}) }\label{dim}
Let $X\subset\Sd$ be an $s$-distance set. Assume the annihilator polynomial of $X$ has the Gegenbauer expansion $F_X(x)=\sum\limits_{k=0}^{s}f_kG_k^{(d)}(x)$. Then we have $|X|={\rm{dim}}(\sum\limits_{k:f_k>0}V_k(X))$.
\end{lemma}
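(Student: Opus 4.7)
The plan is to exploit the defining property of the annihilator polynomial to produce a matrix identity on the Gram-like matrices $\mathbf{D}_k(X)$, and then read off the dimension statement from a positive-semidefinite decomposition of the identity matrix.

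First I would observe that $F_X(1)=1$ and $F_X(\alpha)=0$ for every $\alpha\in A(X)$. Therefore the matrix $\mathbf{F}:=(F_X(\langle \vx,\vy\rangle))_{\vx,\vy\in X}$ has $1$'s on the diagonal and $0$'s off the diagonal, i.e., $\mathbf{F}=\mathbf{I}_{|X|}$. Expanding in the Gegenbauer basis and applying (\ref{D}) entrywise gives the key identity
\begin{equation*}
\mathbf{I}_{|X|}\,=\,\sum_{k=0}^{s}f_k\,\mathbf{D}_k(X).
\end{equation*}

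Next I would split this sum according to the sign of $f_k$: set $\mathbf{P}:=\sum_{k:f_k>0}f_k\mathbf{D}_k(X)$ and $\mathbf{N}:=-\sum_{k:f_k<0}f_k\mathbf{D}_k(X)$, so that both $\mathbf{P}$ and $\mathbf{N}$ are positive semidefinite (each $\mathbf{D}_k(X)=\mathbf{H}_k(X)\mathbf{H}_k(X)^{\rm T}$ is PSD) and $\mathbf{P}=\mathbf{I}_{|X|}+\mathbf{N}$. The right-hand side is strictly positive definite, hence $\mathbf{P}$ has rank $|X|$, i.e., $\dim(\text{Range}(\mathbf{P}))=|X|$.

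Finally I would identify $\text{Range}(\mathbf{P})$ with $\sum_{k:f_k>0}V_k(X)$. Since $V_k(X)$ is by definition the span of positive eigenspaces of the PSD matrix $\mathbf{D}_k(X)$, we have $V_k(X)=\text{Range}(\mathbf{D}_k(X))$. The standard fact that for PSD matrices $A_1,\ldots,A_m$ one has $\text{Range}(\sum_i A_i)=\sum_i\text{Range}(A_i)$ (which follows from $\ker(\sum_i A_i)=\bigcap_i\ker(A_i)$ in the PSD case, by taking orthogonal complements) yields $\text{Range}(\mathbf{P})=\sum_{k:f_k>0}V_k(X)$, completing the proof.

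The step I would expect to require the most care is the last one, since it is tempting but false in general that the range of a sum equals the sum of ranges; the PSD hypothesis is what makes it go through, and I would make sure to invoke it cleanly via the kernel identification. Everything else is essentially bookkeeping: the annihilator identity $\mathbf{F}=\mathbf{I}$, the Gegenbauer expansion, and the PSD splitting $\mathbf{P}=\mathbf{I}+\mathbf{N}$.
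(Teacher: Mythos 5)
Your proof is correct. Note that the paper does not prove this lemma at all—it simply cites it from Nozaki and Suda—so there is no in-paper argument to compare against; your route (the annihilator identity $\mathbf{I}=\sum_{k=0}^{s}f_k\mathbf{D}_k(X)$, the splitting into PSD parts $\mathbf{P}=\mathbf{I}+\mathbf{N}$, and the identification of $\mathrm{Range}(\mathbf{P})$ with $\sum_{k:f_k>0}V_k(X)$ via $\ker(\sum_i A_i)=\bigcap_i\ker(A_i)$ for PSD summands) is exactly the standard argument behind the cited result, presented in range form rather than the equivalent contrapositive form (a nonzero $\mathbf{v}$ orthogonal to $\sum_{k:f_k>0}V_k(X)$ would force $\|\mathbf{v}\|_2^2=\sum_{k:f_k<0}f_k\,\mathbf{v}^{\rm T}\mathbf{D}_k(X)\mathbf{v}\leq 0$, a contradiction). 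You correctly flagged the one step needing care—range of a sum versus sum of ranges—and handled it with the PSD kernel identity, so the argument is complete.
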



\subsection{Spherical embeddings of strongly regular graphs}\label{s24}

In this subsection we briefly introduce the spherical embeddings of  strongly regular
graphs, which will be used in our analysis of Levenstein-equality packings in Section
\ref{s5}. A regular graph $\Gamma$ with $v$ vertices and degree $k$ is called
strongly regular if every two adjacent vertices have $\lambda$ common neighbors and
every two non-adjacent vertices have $\mu$ common neighbors.
Let $\Gamma$ be a strongly regular graph with parameters $(v,k,\lambda,\mu)$.
 Denote
its vertex set by $\{1,2,\ldots,v\}$ for simplicity. The adjacency matrix
$\mathbf{A}$ of $\Gamma$ has three eigenvalues $k$, $r_1$ and $r_2$, with
multiplicities $1$, $n_1$ and $n_2$, respectively. The values of $r_1,r_2,n_1,n_2$
can be calculated as follows \cite{Bondarenko, Cameron2}
\begin{equation}\label{r12}
r_1=\frac{1}{2}(\lambda-\mu+\sqrt{(\lambda-\mu)^2+4(k-\mu)}), \quad r_2=\frac{1}{2}(\lambda-\mu-\sqrt{(\lambda-\mu)^2+4(k-\mu)}),
\end{equation}
\begin{equation}\label{n12}
n_1=\frac{1}{2}(v-1-\frac{2k+(v-1)(\lambda-\mu)}{\sqrt{(\lambda-\mu)^2+4(k-\mu)}}), \quad n_2=\frac{1}{2}(v-1+\frac{2k+(v-1)(\lambda-\mu)}{\sqrt{(\lambda-\mu)^2+4(k-\mu)}}).
\end{equation}

For each $i\in\{1,2\}$, let $E_i$ denote the eigenspace of $\mathbf{A}$ with respect to the eigenvalue $r_i$. Then a  spherical embedding of $\Gamma$ with respect to $E_i$ is a collection of unit vectors in $\R^{n_i}$, obtained by orthogonally projecting a standard basis of $\R^v$ onto the eigenspace $E_i$ and rescaling the projections to have unit norm. It is known that the obtained set is a two-distance spherical $2$-design \cite{Bondarenko, Cameron2}. If we let $Y^{(i)}=\{\vy_j^{(i)}\}_{j=1}^{v}$ denote the spherical embedding of $\Gamma$ with respect to $E_i$, $i\in\{1,2\}$, then we have \cite{Bondarenko,Cameron2,Bannai4} :
\begin{equation}\label{embed}
\absinn{\vy_j^{(i)},\vy_l^{(i)}}=\left\{
        \begin{array}{cl}
        1, & {\rm{if }}\ j=l,\\
         \frac{r_i}{k},  & {\text{if vertex $j$ and vertex $l$ are adjacency}},\\
          -\frac{r_i+1}{v-k-1}, & \text{otherwise}.
        \end{array}
      \right.
\end{equation}

In Section \ref{s5} we will introduce that each Levenstein-equality packing gives
rise to a strongly regular graph. Then we will use one of the spherical embeddings of
this strongly regular graph to provide a lower bound on the size of
Levenstein-equality packings.


\section{Proof of Theorem \ref{general}}\label{s3}

In this section, motivated by the method developed in \cite{Nozaki}, we  present a
proof of Theorem \ref{general}.

Assume $s$ is an even integer. Let $X\subset\Sd$ be an antipodal $s$-distance set
with strength $t$ and let $\hat{X}$ be a half of $X$ (see (\ref{vii}) in Definition  \ref{alldef}). Now we focus on estimating the maximum size of $\hat{X}$.  Noting that
$X$ is antipodal and $s$ is even, we assume
$A(X)=\{-1,0,\pm\alpha_1,\ldots,\pm\alpha_{\frac{s-2}{2}}\}$, where
$\alpha_i\in(0,1)$ for each $i\in\{1,2,\ldots,\frac{s-2}{2}\}$. Noting that
$A(\hat{X})=A(X)\backslash\{-1\}$,  we have
\begin{equation*}
F_{\hat{X}}(x)=x\cdot\prod\limits_{i=1}^{\frac{s-2}{2}}\frac{x^2-\alpha_i^2}{1-\alpha_i^2}.
\end{equation*}
It follows that $F_{\hat{X}}(x)$ is an odd function. Assume that $F_{\hat{X}}(x)$
has the Gegenbauer expansion $F_{\hat{X}}(x)=\sum\limits_{k=0}^{s-1}f_kG_k^{(d)}(x)$.
It is well known that the Gegenbauer polynomial $G_k^{(d)}(x)$ is an odd function if
$k$ is  odd and an even function if $k$ is even \cite[Page 59]{Szego}. This means
that $f_k=0$ provided $k\leq s-1$ is even. Hence, by Lemma \ref{dim} we obtain
\begin{equation}\label{antipodal}
|\hat{X}|={\rm{dim}}(\sum\limits_{k:f_k>0}V_k(\hat{X}))\leq\text{dim}(\sum\limits_{k=0}^{\frac{s-2}{2}}\ V_{2k+1}(\hat{X})).
\end{equation}
Now we aim to prove that $V_{t-s+2}(\hat{X})$ is contained in the sum of some other
subspaces $V_{2k+1}(\hat{X})$ when $s\in [\frac{t+5}{2}, t+1]$. The following lemma
is analogous to \cite[Lemma 3.3]{Nozaki}.

\begin{lemma}\label{lemma1}
Suppose $X\subset\Sd$ is an antipodal $s$-distance set with strength $t$,  where
$s\in [\frac{t+5}{2}, t+1]$ is an even integer and $t\geq 3$ is an odd integer. Let
$\hat{X}$ be a half of $X$.  Assume the annihilator polynomial of $\hat{X}$ has the
Gegenbauer expansion
$F_{\hat{X}}(x)=\sum\limits_{k=0}^{\frac{s-2}{2}}f_{2k+1}G_{2k+1}^{(d)}(x)$.  If
$f_{2i+1}\neq\frac{1}{|\hat{X}|}$ for some integer $i$ satisfying $t-s+2\leq 2i+1\leq
\frac{t-1}{2}$, then we have
$V_{2i+1}(\hat{X})\subset\sum\limits_{k=\frac{t-2i-1}{2}}^{\frac{s-2}{2}}V_{2k+1}(\hat{X})$.
\end{lemma}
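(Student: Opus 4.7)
The plan is to exploit the Gegenbauer expansion of the annihilator polynomial together with the orthogonality relations of Corollary \ref{coro-anti}(ii) to derive a matrix identity that expresses $\mathbf{D}_{2i+1}(\hat{X})$ as a linear combination of products $\mathbf{D}_{2k+1}(\hat{X}) \mathbf{D}_{2i+1}(\hat{X})$ ranging only over the ``large'' indices $k \geq (t-2i-1)/2$. Since $V_{2i+1}(\hat{X})$ is the column space of $\mathbf{D}_{2i+1}(\hat{X})$, and each such product has column space contained in $V_{2k+1}(\hat{X})$, this immediately gives the desired inclusion.

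Concretely, since $F_{\hat{X}}(\langle \vx,\vy\rangle) = \delta_{\vx,\vy}$ for $\vx,\vy \in \hat{X}$, formula (\ref{D}) combined with the Gegenbauer expansion yields the matrix identity
\[
\sum_{k=0}^{(s-2)/2} f_{2k+1}\,\mathbf{D}_{2k+1}(\hat{X}) \;=\; \mathbf{I}.
\]
First I would right-multiply both sides by $\mathbf{D}_{2i+1}(\hat{X})$. Both indices $2k+1$ and $2i+1$ are odd, so the parity hypothesis of Corollary \ref{coro-anti}(ii) is automatic; the degree hypothesis $(2k+1)+(2i+1)\leq t$ translates to $k\leq (t-2i-3)/2$, splitting the sum into a ``small-$k$'' part and a ``large-$k$'' part starting at $k=(t-2i-1)/2$. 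The hypothesis $2i+1\leq (t-1)/2$ ensures that $k=i$ itself lies in the small-$k$ range (this is precisely the inequality $4i+2\leq t-1$), so Corollary \ref{coro-anti}(ii) simultaneously kills every small-$k$ term with $k\neq i$ and converts the $k=i$ term into $|\hat{X}|\,f_{2i+1}\,\mathbf{D}_{2i+1}(\hat{X})$ via $|X|/2=|\hat{X}|$. The lower bound $2i+1\geq t-s+2$ is then what guarantees that $(t-2i-1)/2 \leq (s-2)/2$, so the large-$k$ range is a genuine subrange of $\{0,1,\dots,(s-2)/2\}$.

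Rearranging produces
\[
\bigl(f_{2i+1}\,|\hat{X}| - 1\bigr)\,\mathbf{D}_{2i+1}(\hat{X}) \;=\; -\sum_{k=(t-2i-1)/2}^{(s-2)/2} f_{2k+1}\,\mathbf{D}_{2k+1}(\hat{X})\,\mathbf{D}_{2i+1}(\hat{X}).
\]
Under the hypothesis $f_{2i+1}\neq 1/|\hat{X}|$ the scalar prefactor on the left is nonzero, so one may divide and conclude that $\mathbf{D}_{2i+1}(\hat{X})$ is a linear combination of the matrices $\mathbf{D}_{2k+1}(\hat{X})\mathbf{D}_{2i+1}(\hat{X})$ for $k$ in the large range. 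Since the column space of each such product is contained in the column space of $\mathbf{D}_{2k+1}(\hat{X})$, which equals $V_{2k+1}(\hat{X})$, taking column spaces on both sides yields
\[
V_{2i+1}(\hat{X}) \;\subset\; \sum_{k=(t-2i-1)/2}^{(s-2)/2} V_{2k+1}(\hat{X}),
\]
as required.

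The main delicate point is purely bookkeeping on the indices: one must verify that all the parity and degree conditions of Corollary \ref{coro-anti}(ii) are in force for every small-$k$ term (especially for $k=i$, which is the only way the hypothesis $2i+1\leq (t-1)/2$ is used), and that the resulting large-$k$ range fits inside the summation bounds of $F_{\hat{X}}$'s Gegenbauer expansion (which is where $2i+1\geq t-s+2$ enters). The underlying algebra is routine once the right multiplication is set up.
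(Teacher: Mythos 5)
Your proof is correct and follows essentially the same route as the paper: the identity $\mathbf{I}=\sum_{k}f_{2k+1}\mathbf{D}_{2k+1}(\hat{X})$, right-multiplication by $\mathbf{D}_{2i+1}(\hat{X})$, and Corollary \ref{coro-anti}(ii) to eliminate the small-index products (with the same index bookkeeping), recovering the paper's equation (\ref{F4}). The only difference is cosmetic: where the paper expands an eigenvector of $\mathbf{D}_{2i+1}(\hat{X})$ in eigenbases of the matrices $\mathbf{D}_{2k+1}(\hat{X})$, you observe that for these positive semidefinite matrices $V_{2k+1}(\hat{X})$ coincides with the column space of $\mathbf{D}_{2k+1}(\hat{X})$ and take column spaces directly, which is a slightly cleaner finish of the same argument.
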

\begin{proof}
Set $\mathbf{F}:=(F_{\hat{X}}(\left<{\vx,\vy}\right>))_{\vx,\vy\in \hat{X}}$. Noting
that $F_{\hat{X}}(1)=1$ and $F_{\hat{X}}(\alpha)=0$ for $\alpha\in A(\hat{X})$, we
obtain that $\mathbf{F}$ is exactly the identity matrix of size $|\hat{X}|$. On the
other hand, by the Gegenbauer expansion of $F_{\hat{X}}(x)$ and (\ref{D}), we have
\begin{equation}\label{F1}
\mathbf{I}=\mathbf{F}=\sum\limits_{k=0}^{\frac{s-2}{2}}f_{2k+1}\cdot \mathbf{D}_{2k+1}(\hat{X}).
\end{equation}

For each integer $i$ satisfying $t-s+2\leq 2i+1\leq \frac{t-1}{2}$, we multiply $\mathbf{D}_{2i+1}(\hat{X})$ on both sides of (\ref{F1}) and obtain
\begin{equation}\label{F2}
\begin{aligned}
\mathbf{D}_{2i+1}(\hat{X})&=\sum\limits_{k=0}^{\frac{s-2}{2}}f_{2k+1}\cdot \mathbf{D}_{2k+1}(\hat{X})\mathbf{D}_{2i+1}(\hat{X})\\
&=f_{2i+1}\cdot \mathbf{D}_{2i+1}(\hat{X})\mathbf{D}_{2i+1}(\hat{X})+\sum\limits_{k=0,\ k\neq i}^{\frac{s-2}{2}}f_{2k+1}\cdot \mathbf{D}_{2k+1}(\hat{X})\mathbf{D}_{2i+1}(\hat{X}).
\end{aligned}
\end{equation}
Since $X$ is an antipodal spherical $t$-design, by Corollary \ref{coro-anti}, we have
\begin{equation}\label{F3}
\mathbf{D}_{2i+1}(\hat{X})=|\hat{X}|\cdot f_{2i+1}\cdot \mathbf{D}_{2i+1}(\hat{X})+
\sum_{k=\frac{t-2i-1}{2}}^{\frac{s-2}{2}}f_{2k+1}\cdot \mathbf{D}_{2k+1}(\hat{X})\mathbf{D}_{2i+1}(\hat{X}).
\end{equation}
Noting that $s\in [\frac{t+5}{2}, t+1]$ is an even integer and $t\geq 3$ is an odd
integer, rearranging equation (\ref{F3}) gives
\begin{equation}\label{F4}
(1-|\hat{X}|\cdot f_{2i+1})\mathbf{D}_{2i+1}(\hat{X})=\sum\limits_{k=\frac{t-2i-1}{2}}^{\frac{s-2}{2}}f_{2k+1}\cdot \mathbf{D}_{2k+1}(\hat{X})\mathbf{D}_{2i+1}(\hat{X}).
\end{equation}

Assume $\mathbf{v}$ is an eigenvector of $\mathbf{D}_{2i+1}(\hat{X})$ with respect to
an eigenvalue $\lambda\neq 0$. Then, we have
\begin{equation*}
(1-|\hat{X}|\cdot f_{2i+1})\mathbf{D}_{2i+1}(\hat{X})\mathbf{v}=\sum\limits_{k=\frac{t-2i-1}{2}}^{\frac{s-2}{2}}f_{2k+1}\cdot \mathbf{D}_{2k+1}(\hat{X})\mathbf{D}_{2i+1}(\hat{X})\mathbf{v},
\end{equation*}
which implies
\begin{equation}\label{eq:L2}
(1-|\hat{X}|\cdot
f_{2i+1})\cdot\lambda\mathbf{v}=\lambda\cdot\sum\limits_{k=\frac{t-2i-1}{2}}^{\frac{s-2}{2}}f_{2k+1}\cdot
\mathbf{D}_{2k+1}(\hat{X})\mathbf{v}.
\end{equation}
 Note that a real symmetric matrix of size $|\hat{X}|$
always has $|\hat{X}|$ linear independent eigenvectors. Hence, we can write
$\mathbf{v}=\sum_{j=1}^{|\hat{X}|}\mathbf{v}_j^{(2k+1)}$  for each
$k\in[\frac{t-2i-1}{2}, \frac{s-2}{2}]$, where
$\{\mathbf{v}_j^{(2k+1)}\}_{j=1}^{|\hat{X}|}$ is a set of linear independent
eigenvectors of $\mathbf{D}_{2k+1}(\hat{X})$. Assume that $\lambda_j^{(2k+1)}$ is an
eigenvalue of $\mathbf{D}_{2k+1}(\hat{X})$ with respect to the eigenvector
$\mathbf{v}_j^{(2k+1)}$. Since $\mathbf{D}_{2k+1}(\hat{X})$ is a symmetric positive
semidefinite matrix,  we have $\lambda_j^{(2k+1)}\geq0$ for each
$j\in\{1,2,\ldots,|\hat{X}|\}$. Then, according to (\ref{eq:L2}), we have
\begin{equation*}
\begin{aligned}
(1-|\hat{X}|\cdot f_{2i+1})\cdot\lambda\mathbf{v}=\lambda\cdot\sum\limits_{k=\frac{t-2i-1}{2}}^{\frac{s-2}{2}}f_{2k+1}\cdot \mathbf{D}_{2k+1}(\hat{X})\sum_{j=1}^{|\hat{X}|}\mathbf{v}_j^{(2k+1)},
\end{aligned}
\end{equation*}
which implies
\[
(1-|\hat{X}|\cdot
f_{2i+1})\cdot\lambda\mathbf{v}=\lambda\cdot\sum\limits_{k=\frac{t-2i-1}{2}}^{\frac{s-2}{2}}f_{2k+1}\cdot
\sum_{{j:\ \lambda_j^{(2k+1)}>0} }\lambda_j^{(2k+1)}\mathbf{v}_j^{(2k+1)}.
\]
Thus, if  $f_{2i+1}\neq\frac{1}{|\hat{X}|}$, then  $\mathbf{v}$ can be written  as a
linear combination of vectors in
$\sum\limits_{k=\frac{t-2i-1}{2}}^{\frac{s-2}{2}}V_{2k+1}(\hat{X})$, which implies
that
$\mathbf{v}\in\sum\limits_{k=\frac{t-2i-1}{2}}^{\frac{s-2}{2}}V_{2k+1}(\hat{X})$.
Since $\mathbf{v}$ can be any vector in $V_{2i+1}(\hat{X})$, we arrive at our
conclusion.
\end{proof}

It remains to show that the coefficient $f_{t-s+2}$ in the Gegenbauer expansion of $F_{\hat{X}}(x)$ is not $\frac{1}{|\hat{X}|}$. We need the following lemma.

\begin{lemma}{\rm \cite[Lemma 2.6]{Delsarte} }\label{lemma2}
Assume $F(x)=\sum_kf_kG_k^{(d)}(x)$ and let $Q(x):=\frac{G_l^{(d)}(x)}{h_l}\cdot F(x)$ for some positive integer $l$. Assume $Q(x)$ has the Gegenbauer expansion $Q(x)=\sum_kq_k\cdot G_k^{(d)}(x)$. Then $q_0=f_l$.
\end{lemma}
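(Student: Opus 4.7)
The plan is to extract the coefficient $q_0$ by an integration over $\Sd$ that picks out the constant term of the Gegenbauer expansion, and then to reduce the resulting integral back to $f_l$ via the orthogonality of the Gegenbauer polynomials. The only tool needed is the addition formula \eqref{addition} together with the orthonormality of the basis $\{\phi_{k,i}^{(d)}\}_{i=1}^{h_k}$ of $\mathrm{Harm}_k(\R^d)$.

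First I would establish the orthogonality identity
\[
\int_{\Sd} G_k^{(d)}(\langle \vx,\vy\rangle)\, G_m^{(d)}(\langle \vx,\vy\rangle)\, d\mu_d(\vx) \;=\; h_k\,\Delta_{k,m}
\]
for every fixed $\vy\in\Sd$. Expanding both factors by \eqref{addition} and interchanging the sum and integral reduces the left-hand side to $\sum_{i,j}\phi_{k,i}^{(d)}(\vy)\phi_{m,j}^{(d)}(\vy)\cdot\langle \phi_{k,i}^{(d)},\phi_{m,j}^{(d)}\rangle$, which by orthonormality collapses to $\sum_i\phi_{k,i}^{(d)}(\vy)^2 = G_k^{(d)}(1) = h_k$ when $k=m$ and to $0$ otherwise. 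The special case $m=0$ (using $G_0^{(d)}\equiv 1$) says in particular that $\int_{\Sd}G_k^{(d)}(\langle \vx,\vy\rangle)\,d\mu_d(\vx) = \Delta_{k,0}$.

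Next, fix any $\vy\in\Sd$ and evaluate $\int_{\Sd} Q(\langle \vx,\vy\rangle)\,d\mu_d(\vx)$ in two different ways. On one hand, expanding $Q$ in its Gegenbauer series and using $\int_{\Sd}G_k^{(d)}(\langle\vx,\vy\rangle)d\mu_d(\vx)=\Delta_{k,0}$ kills every term except $k=0$, giving
\[
\int_{\Sd} Q(\langle \vx,\vy\rangle)\,d\mu_d(\vx) \;=\; q_0.
\]
On the other hand, substituting $Q = \tfrac{1}{h_l}G_l^{(d)}\cdot F$ and expanding $F(x)=\sum_k f_k G_k^{(d)}(x)$ yields
\[
\int_{\Sd} Q(\langle \vx,\vy\rangle)\,d\mu_d(\vx) \;=\; \frac{1}{h_l}\sum_k f_k \int_{\Sd} G_l^{(d)}(\langle \vx,\vy\rangle)\,G_k^{(d)}(\langle \vx,\vy\rangle)\,d\mu_d(\vx) \;=\; \frac{1}{h_l}\sum_k f_k\, h_l\,\Delta_{k,l} \;=\; f_l
\]
by the orthogonality identity from the previous step. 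Comparing the two expressions gives $q_0=f_l$, as required.

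There is no real obstacle here: the whole argument is a routine use of the reproducing-kernel/orthogonality properties of Gegenbauer polynomials encoded in \eqref{addition}. The one minor technical point to keep in mind is that the Gegenbauer expansions of $F$ and $Q$ are finite (they are polynomials), so interchanging sum and integral needs no justification beyond linearity.
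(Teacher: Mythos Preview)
Your proof is correct. The paper itself does not supply a proof of this lemma; it is simply quoted from \cite[Lemma~2.6]{Delsarte}. Your argument is the standard one: extract $q_0$ by integrating $Q(\langle\vx,\vy\rangle)$ over the sphere, and recover $f_l$ via the orthogonality relation $\int_{\Sd}G_k^{(d)}(\langle\vx,\vy\rangle)G_m^{(d)}(\langle\vx,\vy\rangle)\,d\mu_d(\vx)=h_k\Delta_{k,m}$, which you correctly derive from the addition formula~\eqref{addition} and the orthonormality of the harmonic bases. The only implicit fact you use that the paper does not spell out explicitly is that $\mathrm{Harm}_k(\R^d)\perp\mathrm{Harm}_m(\R^d)$ for $k\neq m$, but this is standard and is tacitly assumed throughout Section~\ref{s21} (it underlies, for instance, Lemma~\ref{hk}). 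In Delsarte's original formulation the same computation is carried out with respect to the Gegenbauer weight $(1-x^2)^{(d-3)/2}$ on $[-1,1]$ rather than by integrating over $\Sd$, but the two are equivalent and your version meshes more directly with the notation of the present paper.
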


With the help of the above lemma, we now show that the coefficient $f_{t-s+2}$ in the Gegenbauer expansion of $F_{\hat{X}}(x)$ is not $\frac{1}{|\hat{X}|}$. Actually, we prove that $f_{t-s+2}$ is the first coefficient with this property.

\begin{lemma}\label{lemma3}
Suppose $X\subset\Sd$ is an antipodal $s$-distance set with strength $t$, where $s\in[\frac{t+3}{2},t+1]$ is an even integer and $t\geq 3$ is an odd integer. Let $\hat{X}$ be a half of $X$.  Assume the annihilator polynomial of $\hat{X}$ has the Gegenbauer expansion $F_{\hat{X}}(x)=\sum\limits_{k=0}^{\frac{s-2}{2}}f_{2k+1}G_{2k+1}^{(d)}(x)$. Then, $f_{t-s+2}\neq \frac{1}{|\hat{X}|}$ and $f_{l-s+2}=\frac{1}{|\hat{X}|}$ for each odd integer $l$ satisfying $s-1\leq l<t$.
\end{lemma}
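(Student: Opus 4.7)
My plan is to apply Lemma \ref{lemma2} in tandem with a double-sum identity. For each odd integer $m$, define
\[
Q_m(x) := \frac{G_m^{(d)}(x)}{h_m}\,F_{\hat X}(x).
\]
By Lemma \ref{lemma2}, the constant Gegenbauer coefficient of $Q_m$ is exactly $f_m$. Since $F_{\hat X}$ is odd (degree $s-1$) and $G_m^{(d)}$ is odd for odd $m$, $Q_m$ is an even polynomial of degree $m+s-1$; in particular only even-index Gegenbauer coefficients of $Q_m$ are nonzero. Moreover, $F_{\hat X}$ vanishes on $\{0,\pm\alpha_1,\ldots,\pm\alpha_{(s-2)/2}\}\supseteq A(\hat X)$ and $F_{\hat X}(1)=1$; combined with $G_m^{(d)}(1)=h_m$ this gives $Q_m(\langle\vx,\vy\rangle)=\delta_{\vx,\vy}$ for $\vx,\vy\in\hat X$. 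Thus $\sum_{\vx,\vy\in\hat X}Q_m(\langle\vx,\vy\rangle)=|\hat X|$, and by the evenness of $Q_m$ together with $X=\hat X\cup(-\hat X)$,
\[
\sum_{\vx,\vy\in X}Q_m(\langle\vx,\vy\rangle)=4|\hat X|=2|X|.
\]

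I will then evaluate the same double sum by expanding $Q_m(x)=\sum_k q_k G_k^{(d)}(x)$ and applying the addition formula \eqref{addition}: it becomes $\sum_k q_k \norm{\mathbf H_k(X)^\T \mathbf H_0(X)}^2$. The antipodality of $X$ together with $\phi_{k,i}^{(d)}(-\vx)=(-1)^k\phi_{k,i}^{(d)}(\vx)$ forces $\mathbf H_k(X)^\T \mathbf H_0(X)=\mathbf 0$ for odd $k$, while Lemma \ref{hk}(i) gives the same conclusion for $1\le k\le t$; combined with $q_k=0$ for odd $k$, only indices $k=0$ and even $k>t$ can contribute.

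For odd $m$ with $m+s-1\le t$ --- equivalently, odd $m\le t-s$, since $t-s+1$ is even --- only $k=0$ survives, yielding $f_m\cdot |X|^2=2|X|$ and hence $f_m=1/|\hat X|$. Setting $m=l-s+2$ for each odd $l\in[s-1,t-2]$ proves the second assertion. For $m=t-s+2$ the degree of $Q_m$ is $t+1$, which is even, so a term at $k=t+1$ also appears and I obtain
\[
f_{t-s+2}|X|^2 + q_{t+1}\norm{\mathbf H_{t+1}(X)^\T \mathbf H_0(X)}^2 = 2|X|.
\]
To conclude $f_{t-s+2}\ne 1/|\hat X|$ it suffices to show the second term is nonzero. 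The leading coefficient of $Q_m$ is a positive product of the leading coefficients of $G_m^{(d)}/h_m$ and of $F_{\hat X}=x\prod(x^2-\alpha_i^2)/(1-\alpha_i^2)$, so $q_{t+1}\ne 0$. The factor $\norm{\mathbf H_{t+1}(X)^\T \mathbf H_0(X)}^2$ is nonzero because $X$ has strength exactly $t$: by Lemma \ref{hk}(i), some harmonic of degree $t+1$ must have nonzero average on $X$.

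The main obstacle is the parity bookkeeping --- tracking which Gegenbauer indices survive after the combined constraints of evenness of $Q_m$, antipodality of $X$, and the $t$-design property --- so that the only potentially "bad" index in the final case is $k=t+1$; once this is pinned down, the nonvanishing of $q_{t+1}$ and of $\norm{\mathbf H_{t+1}(X)^\T \mathbf H_0(X)}^2$ are straightforward.
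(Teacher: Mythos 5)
Your proof is correct and takes essentially the same route as the paper's: the same auxiliary polynomials $Q_m=\frac{G_m^{(d)}}{h_m}F_{\hat{X}}$, the same appeal to Lemma \ref{lemma2}, and the same double count of $\sum Q_m(\langle \vx,\vy\rangle)$ via the addition formula, with the $t$-design property annihilating every term except $k=0$ and (only when $m=t-s+2$) $k=t+1$, whose coefficient and factor $\|\mathbf{H}_{t+1}^{\rm T}\mathbf{H}_{0}\|_2^2$ are nonzero because the strength is exactly $t$. The only cosmetic difference is that you sum over the full antipodal set $X$ (using Lemma \ref{hk} plus antipodality), whereas the paper sums over the half $\hat{X}$ via Corollary \ref{coro-anti}.
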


\begin{proof}
Set  $Q_l(x):=\frac{G_{l-s+2}^{(d)}(x)}{h_{l-s+2}}\cdot F_{\hat{X}}(x)$ for each odd
integer $l$ satisfying $s-1\leq l\leq t$. Since $F_{\hat{X}}(x)$ is a polynomial of
degree $s-1$, we see that $Q_l(x)$ is a polynomial of degree $l+1$. Noting that both
$G_{l-s+2}^{(d)}(x)$ and $F_{\hat{X}}(x)$ are odd functions,  we obtain that $Q_l(x)$
is an even function. Thus we can assume $Q_l(x)$ has the Gegenbauer expansion
$Q_l(x)=\sum\limits_{i=0}^{\frac{l+1}{2}}q_{2i}^{(l)}\cdot G_{2i}^{(d)}(x)$. Since
$F_{\hat{X}}(1)=1$ and $F_{\hat{X}}(\alpha)=0$ for each $\alpha\in A(\hat{X})$, we
obtain that $Q_l(1)=1$ and $Q_l(\alpha)=0$ for each $\alpha\in A(\hat{X})$. This
implies
\begin{equation}\label{sumQ}
\sum\limits_{\vx,\vy\in \hat{X}}Q_l(\langle \vx,\vy\rangle)=|\hat{X}|.
\end{equation}
On the other hand, we have
\begin{equation}\label{Q0}
\sum\limits_{\vx,\vy\in \hat{X}}Q_l(\langle \vx,\vy\rangle)=\sum\limits_{\vx,\vy\in \hat{X}}\sum\limits_{i=0}^{\frac{l+1}{2}}q_{2i}^{(l)}\cdot G_{2i}^{(d)}(\langle x,y\rangle)=|\hat{X}|^2q_0^{(l)}+\sum\limits_{i=1}^{\frac{l+1}{2}}(q_{2i}^{(l)}\cdot\sum\limits_{\vx,\vy\in \hat{X}} G_{2i}^{(d)}(\langle \vx,\vy\rangle))
\end{equation}
Note that $\mathbf{H}_{0}(\hat{X})$ is the all-ones vector of size $|\hat{X}|$.
According to  (\ref{D}), for each $i\in\{1,2,\ldots,\frac{l+1}{2}\}$, we have
\begin{equation}\label{Q00}
\begin{aligned}
\sum\limits_{\vx,\vy\in \hat{X}}  G_{2i}^{(d)}(\langle \vx,\vy\rangle)&=\mathbf{H}_{0}(\hat{X})^T \mathbf{D}_{2i}(\hat{X})\mathbf{H}_{0}(\hat{X})\\
&=\mathbf{H}_{0}(\hat{X})^T\mathbf{H}_{2i}(\hat{X})\mathbf{H}_{2i}(\hat{X})^T\mathbf{H}_{0}(\hat{X})=||\mathbf{H}_{2i}(\hat{X})^{\rm T}\mathbf{H}_{0}(\hat{X})||_2^2.
\end{aligned}
\end{equation}
Combining  (\ref{Q0}) and (\ref{Q00}), we obtain
\begin{equation}\label{Q01}
\sum\limits_{\vx,\vy\in \hat{X}}Q_l(\langle \vx,\vy\rangle)=|\hat{X}|^2q_0^{(l)}+\sum\limits_{i=1}^{\frac{l+1}{2}}q_{2i}^{(l)}\cdot ||\mathbf{H}_{2i}(\hat{X})^{\rm T}\mathbf{H}_{0}(\hat{X})||_2^2.
\end{equation}
Combining  (\ref{sumQ}) and (\ref{Q01}), we arrive at
\begin{equation}\label{Q1}
|\hat{X}|-|\hat{X}|^2q_0^{(l)}=\sum\limits_{i=1}^{\frac{l+1}{2}}q_{2i}^{(l)}\cdot ||\mathbf{H}_{2i}(\hat{X})^{\rm T}\mathbf{H}_{0}(\hat{X})||_2^2.
\end{equation}
{ Since $X$ has strength $t$, by Corollary \ref{coro-anti} we have
\begin{equation}\label{H2i}
||\mathbf{H}_{2i}(\hat{X})^{\rm T}\mathbf{H}_{0}(\hat{X})||_2^2=0,\ \text{ for all }\  i\in\{1,2,\ldots,\frac{t-1}{2}\}
\end{equation}
and
\begin{equation}\label{Ht1}
||\mathbf{H}_{t+1}(\hat{X})^{\rm T}\mathbf{H}_{0}(\hat{X})||_2^2\neq 0.
\end{equation}
Then, by equation (\ref{Q1}) we obtain
\begin{equation}\label{ql}
|\hat{X}|-|\hat{X}|^2q_0^{(l)}=0
\end{equation}
when $s-1\leq l< t$ and
\begin{equation}\label{qt}
|\hat{X}|-|\hat{X}|^2q_0^{(t)}=q_{t+1}^{(t)}\cdot
\|\mathbf{H}_{t+1}(\hat{X})^{\rm{T}}\mathbf{H}_{0}(\hat{X})\|_2^2.
\end{equation}
The (\ref{ql}) implies  $q_0^{(l)}=\frac{1}{|\hat{X}|}$ for each odd integer $s-1\leq
l<t$. Noting that $Q_t(x)$ is a polynomial of degree $t+1$, we have
$q_{t+1}^{(t)}\neq 0$. Combining (\ref{Ht1}) and (\ref{qt}), we obtain
$q_0^{(t)}\neq\frac {1}{|\hat{X}|}$. } By Lemma \ref{lemma2} we know that
$q_0^{(l)}=f_{l-s+2}$ for each $l\geq s-1$. Hence, we arrive at our conclusion.
\end{proof}

 We next present a proof of Theorem \ref{general}.

\begin{proof}[Proof of Theorem \ref{general}]

Recall that $X$ is an antipodal $s$-distance set with strength $t$. Let $\hat{X}$ be a half of $X$. Combining Lemma \ref{lemma1} and Lemma \ref{lemma3}, we know that $V_{t-s+2}(\hat{X})$ is contained in $V_{s-1}(\hat{X})$. Then by (\ref{antipodal}) we have
\begin{equation}\label{nonantipodal}
\begin{aligned}
|\hat{X}|&\leq\text{dim}(\sum\limits_{k=0}^{\frac{s-2}{2}}\ V_{2k+1}(\hat{X}))
=\text{dim}(\sum\limits_{\substack{k=0 \\ k\neq \frac{t-s+1}{2}}}^{\frac{s-2}{2}}\ V_{2k+1}(\hat{X}))\\
&\leq \sum\limits_{\substack{k=0 \\ k\neq \frac{t-s+1}{2}}}^{\frac{s-2}{2}}\text{dim}\ V_{2k+1}(\hat{X})\leq \sum\limits_{k=0}^{\frac{s-2}{2}} h_{2k+1}-h_{t-s+2}=\binom{d+s-2}{s-1}-h_{t-s+2}.
\end{aligned}
\end{equation}
The last inequality in (\ref{nonantipodal}) follows from
\begin{equation*}
\text{dim}\ V_{2k+1}(X)=\text{rank}\ (\mathbf{D}_{2k+1}(X))\leq \text{rank}\ (\mathbf{H}_{2k+1}(X))\leq
h_{2k+1}.
\end{equation*}
Noting that $|X|=2|\hat{X}|$, we obtain $|X|\leq 2\binom{d+s-2}{s-1}-2 h_{t-s+2}$.
\end{proof}

\begin{remark}
Lemma \ref{lemma1} and Lemma \ref{lemma3} can be easily extended to the case when $s$ is an odd integer. Using these extended results one can obtain an upper bound on $|X|$ for odd $s\in[\frac{t+5}{2},t+2]$, which is actually the same with the bound in (\ref{odds}). Hence, for clarity and convenience we only consider the case when $s$ is an even integer in Lemma \ref{lemma1} and Lemma \ref{lemma3} .
\end{remark}


\section{Proof of Theorem \ref{coro1}}\label{s4}

The aim of this section is to present a proof of Theorem \ref{coro1}.
We need the following necessary condition on the existence of real ETFs.

\begin{lemma}{\rm (Theorem A in \cite{ETF2})}\label{aw}
Let $d$ and $n$ be two integers satisfying $n>d+1>2$ and $n\neq 2d$. If there exists an ETF for $\R^d$ with size $n$, then both $\sqrt{\frac{d(n-1)}{n-d}}$ and $\sqrt{\frac{(n-d)(n-1)}{d}}$ are odd integers.
\end{lemma}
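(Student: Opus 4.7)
The plan is to pass to the signature matrix of $\Phi$ and exploit both its integrality and a mod-$2$ reduction. Write $\Phi=[\boldsymbol\varphi_1 \mid \cdots \mid \boldsymbol\varphi_n]$ and form the Gram matrix $\mathbf{G}=\Phi^{\T}\Phi$. By the Welch-bound equality, $\mathbf{G}_{ii}=1$ and $|\mathbf{G}_{ij}|=\alpha:=\sqrt{(n-d)/(d(n-1))}$ for $i\neq j$, so $\mathbf{Q}:=(\mathbf{G}-\mathbf{I})/\alpha$ is a symmetric integer matrix with zero diagonal and $\pm 1$ off-diagonal entries. Using the tight-frame identity $\Phi\Phi^{\T}=(n/d)\mathbf{I}$ I obtain $\mathbf{G}^{2}=(n/d)\mathbf{G}$; substituting $\mathbf{G}=\mathbf{I}+\alpha\mathbf{Q}$ and simplifying yields
\[
\mathbf{Q}^{2}\;=\;\beta\mathbf{Q}+(n-1)\mathbf{I},\qquad \beta:=\frac{n-2d}{d\alpha}.
\]

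Hence the spectrum of $\mathbf{Q}$ is contained in $\{\rho_1,\rho_2\}$, the two roots of $x^{2}-\beta x-(n-1)=0$. A direct simplification using $\beta^{2}+4(n-1)=n^{2}(n-1)/(d(n-d))$ gives $\rho_1=\sqrt{(n-d)(n-1)/d}$ and $\rho_2=-\sqrt{d(n-1)/(n-d)}$. Since $\mathbf{Q}\neq\mathbf{0}$ (otherwise $\mathbf{G}=\mathbf{I}$ would force $n\leq d$), both eigenvalues actually occur; the conditions $m_1+m_2=n$ and $m_1\rho_1+m_2\rho_2=\tr(\mathbf{Q})=0$ then pin down the multiplicities as $m_1=d$ and $m_2=n-d$.

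Next I would upgrade this to rationality. The characteristic polynomial of the integer matrix $\mathbf{Q}$ lies in $\Z[x]$, so $\rho_1$ and $\rho_2$ are algebraic integers. Because $n\neq 2d$, the multiplicities $d$ and $n-d$ differ, so $\rho_1$ and $\rho_2$ cannot be Galois conjugates over $\mathbb{Q}$; consequently each must already be rational, and a rational algebraic integer is an ordinary integer.

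The last and subtlest step is to strengthen ``integer'' to ``odd integer''. My plan is to reduce $\mathbf{Q}$ modulo $2$: every off-diagonal entry is $\pm 1\equiv 1$, so $\mathbf{Q}\equiv\mathbf{J}-\mathbf{I}\pmod{2}$, whose characteristic polynomial over $\F_{2}$ is $(x+1)^{n-1}(x+1-n)$. Equating this with the mod-$2$ reduction of $(x-\rho_1)^{d}(x-\rho_2)^{n-d}$ and using unique factorisation in $\F_{2}[x]$, one sees that if $n$ were odd the $\F_{2}$-polynomial would factor as $x(x+1)^{n-1}$, which would force $d=1$ or $n-d=1$, both excluded by $n>d+1>2$; hence $n$ must be even. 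When $n$ is even the polynomial collapses to $(x+1)^{n}$, so the $\F_{2}$-reductions of both $\rho_1$ and $\rho_2$ equal $1$, i.e., both $\sqrt{(n-d)(n-1)/d}$ and $\sqrt{d(n-1)/(n-d)}$ are odd positive integers. The main obstacle I anticipate is precisely this mod-$2$ unique-factorisation bookkeeping, which has to kill the odd-$n$ case and deliver oddness of the eigenvalues in the same stroke.
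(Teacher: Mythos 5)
Your proposal is correct, and it is worth noting that the paper itself offers no proof of this lemma: it is quoted verbatim as Theorem A of \cite{ETF2}, so the only in-paper comparison is with the techniques used elsewhere. The integrality half of your argument (pass to the signature matrix $\mathbf{Q}=(\mathbf{G}-\mathbf{I})/\alpha$, derive $\mathbf{Q}^2=\beta\mathbf{Q}+(n-1)\mathbf{I}$ from tightness, identify the eigenvalues $\rho_1=\sqrt{(n-d)(n-1)/d}$ and $\rho_2=-\sqrt{d(n-1)/(n-d)}$ with multiplicities $d$ and $n-d$, and conclude rationality because $n\neq 2d$ forces unequal multiplicities, so the two eigenvalues cannot be algebraic conjugates) is exactly the same device the paper uses in its own Lemma 5.2 for Levenstein-equality packings, so this part is fully in line with the paper's toolkit; your computations ($\beta^2+4(n-1)=n^2(n-1)/(d(n-d))$, the trace argument pinning $m_1=d$, $m_2=n-d$) check out. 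The oddness half is the genuinely additional ingredient, and your mod-$2$ bookkeeping is sound: since $\mathbf{Q}\equiv\mathbf{J}-\mathbf{I}\pmod 2$, the reduction of the characteristic polynomial gives $(x-\rho_1)^d(x-\rho_2)^{n-d}\equiv(x+1)^{n-1}(x+1-n)$ in $\F_2[x]$; the exponent of the irreducible factor $x$ on the left can only be $0$, $d$, $n-d$ or $n$, and since $n>d+1>2$ gives $d\geq 2$ and $n-d\geq 2$, it can never equal $1$, which kills the case $n$ odd, while for $n$ even the right side is $(x+1)^n$, forcing the exponent to be $0$ and hence both $\rho_1$ and $-\rho_2$ to be odd positive integers (note, as you implicitly use, that oddness of both also forces $n-1=\rho_1\lvert\rho_2\rvert$ odd, consistent with $n$ even). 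So you have a clean, self-contained proof of the cited result, comparable in spirit to the number-theoretic argument of \cite{ETF2} but organized around a single unique-factorization comparison in $\F_2[x]$.
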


Using the above lemma, we present a proof of Theorem \ref{coro1}.

\begin{proof}[Proof of Theorem \ref{coro1}]

Recall that $\Phi$ is an ETF for $\R^d$ with size $n>d+1\geq6$. We first show that
$n\leq\frac{d(d+2)}{3}$ if $n\neq\frac{d(d+1)}{2}$. Notice that $2d<\frac{d(d+2)}{3}$
when $d\geq 5$, so we only need to consider the case when $n>2d$.   According to
Lemma \ref{aw}, we can assume $\sqrt{\frac{d(n-1)}{n-d}}=2k-1$ for some positive
integer $k>1$. Then a simple calculation shows:
\begin{subequations}
\begin{align}
(2k-1)^2(n-d)&=d(n-1)\\
((2k-1)^2-d)(n-d)&=d(d-1).\label{etf1}
\end{align}
\end{subequations}
Since $n-d$ and $d(d-1)$ are positive, from (\ref{etf1}) we obtain  that $(2k-1)^2-d$
is a positive integer. If $(2k-1)^2-d=1$, then (\ref{etf1}) gives $n=d^2$. This is
impossible since $n$ must satisfy the Gerzon bound (\ref{Gerzon}). Hence, we must
have $(2k-1)^2-d\geq2$. If $(2k-1)^2-d=2$, then (\ref{etf1}) gives
$n=\frac{d(d+1)}{2}$; otherwise, we have $(2k-1)^2-d\geq3$, then (\ref{etf1}) implies
$n\leq\frac{d(d+2)}{3}$. Hence, we have $n\leq\frac{d(d+2)}{3}$ if
$n\neq\frac{d(d+1)}{2}$.

It remains to prove that $n\geq d+\frac12+\sqrt{3d+\frac14}$ if $n\neq d+\frac12+\sqrt{2d+\frac14}$. Notice that $d+\frac12+\sqrt{3d+\frac14}<2d$ when $d\geq5$, so we only need to consider the case when $n<2d$. Set $m:=n-d$. By Lemma \ref{aw}, we may assume $\sqrt{\frac{m(n-1)}{n-m}}=\sqrt{\frac{(n-d)(n-1)}{d}}=2p-1$ for some positive integer $p$. By similar computation with (\ref{etf1}) we obtain
\begin{equation}\label{etf2}
((2p-1)^2-m)(n-m)=m(m-1).
\end{equation}
Since $n-m=d>0$ and $m=n-d>1$, from (\ref{etf2}) we see that $(2p-1)^2-m$ is a
positive  integer. If $(2p-1)^2-m=1$, then (\ref{etf2}) gives $n=m^2$, that is,
$n=d+\frac12+\sqrt{d+\frac14}$. This is impossible since $n$ must satisfy the Gerzon
bound (\ref{Gerzon}). Hence, we have $(2p-1)^2-m\geq2$. If $(2p-1)^2-m=2$, then we
have $n=\frac{m(m+1)}{2}$, that is, $n=d+\frac12+\sqrt{2d+\frac14}$; otherwise, we
have $(2p-1)^2-m\geq3$, then (\ref{etf2}) implies  $n\leq\frac{m(m+2)}{3}$, that is,
$n\geq d+\frac12+\sqrt{3d+\frac14}$. Hence, we must have $n\geq
d+\frac12+\sqrt{3d+\frac14}$ if $n\neq d+\frac12+\sqrt{2d+\frac14}$. Putting all
these together, we arrive at the conlcusion.
\end{proof}

For the remainder of this section we  compare the Krein conditions for strongly
regular graph with Gerzon bound. It is well known that there exists an ETF for $\R^d$
with size $n>d+1>2$ if and only if there exists a strongly regular graph with
parameters $(n-1,a,\frac{3a-n}{2},\frac{a}{2})$ \cite{ETF2,Waldron,ETF}, where $a$ is
defined in (\ref{a}). It is also known that each strongly regular graph satisfies the
following  Krein conditions:

\begin{lemma}{\rm{(see \cite{Scott,Brouwer2,ETF} )}}\label{krein}
Assume there exists a strongly regular graph $\Gamma$ with given parameters
$v,k,\lambda,\mu$.  Then the parameters $v,k,\lambda,\mu$ satisfy the following Krein
conditions:
\begin{subequations}\label{krein2}
\begin{align}
K_1:=(k+r_1)(r_2+1)^2-(r_1+1)(k+r_1+2r_1r_2)\geq0,\label{k1}\\
K_2:=(k+r_2)(r_1+1)^2-(r_2+1)(k+r_2+2r_1r_2)\geq0,\label{k2}
\end{align}
\end{subequations}
where  $r_1$ and $r_2$ are defined  in (\ref{r12}).
\end{lemma}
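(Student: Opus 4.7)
The plan is to derive both Krein conditions from the standard Bose--Mesner algebra framework for strongly regular graphs. Since the adjacency matrix $\mathbf{A}$ of $\Gamma$ has exactly three distinct eigenvalues $k$, $r_1$, $r_2$ with multiplicities $1$, $n_1$, $n_2$, the matrices $\mathbf{I}$, $\mathbf{A}$, $\mathbf{J}-\mathbf{I}-\mathbf{A}$ span a commutative algebra whose dual basis of primitive idempotents $\mathbf{E}_0=\frac{1}{v}\mathbf{J}$, $\mathbf{E}_1$, $\mathbf{E}_2$ (the orthogonal projections onto the three eigenspaces of $\mathbf{A}$) can be written as explicit linear combinations of $\mathbf{I}$, $\mathbf{J}$, $\mathbf{A}$ using the quantities in (\ref{r12})--(\ref{n12}).

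The central mechanism is the Schur product theorem: if $\mathbf{M}$ and $\mathbf{N}$ are positive semidefinite, so is their entrywise product $\mathbf{M}\circ \mathbf{N}$. Applied to $\mathbf{E}_i\circ \mathbf{E}_i$ for $i\in\{1,2\}$, one expands in the idempotent basis as
\begin{equation*}
\mathbf{E}_i\circ \mathbf{E}_i\,=\,\frac{1}{v}\sum_{\ell=0}^{2}q^{\ell}_{ii}\,\mathbf{E}_\ell,
\end{equation*}
and the Krein parameters $q^{\ell}_{ii}$ can be recovered from $\mathrm{tr}\bigl((\mathbf{E}_i\circ \mathbf{E}_i)\mathbf{E}_\ell\bigr)$ using the orthogonality relation $\mathbf{E}_\ell \mathbf{E}_m=\boldsymbol\Delta_{\ell,m}\mathbf{E}_\ell$. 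Testing $\mathbf{E}_i\circ \mathbf{E}_i$ against any nonzero vector in the range of $\mathbf{E}_\ell$ then forces $q^{\ell}_{ii}\geq 0$.

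To produce the two specific inequalities, I would compute $q^{1}_{11}$ and $q^{2}_{22}$ explicitly. Each is a rational expression in $k,r_1,r_2,n_1,n_2$ which, after substituting (\ref{r12})--(\ref{n12}) and applying the classical identities $r_1+r_2=\lambda-\mu$ and $r_1 r_2=\mu-k$, simplifies up to a positive scalar to the left-hand sides $K_1$ and $K_2$ of (\ref{k1}) and (\ref{k2}). The remaining Krein parameters $q^{\ell}_{ij}$ either reduce to trivially satisfied inequalities (e.g.\ $q^{0}_{ii}=n_i\geq 0$) or are equivalent, via the duality $i\leftrightarrow j$, to the two derived above.

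The main obstacle is purely algebraic: the bookkeeping needed to massage the rational expression for $q^{1}_{11}$ (respectively $q^{2}_{22}$) into the compact symmetric form displayed in (\ref{k1}) (respectively (\ref{k2})). There is no conceptual hurdle beyond careful substitution and the Schur product theorem, so the proof is essentially a calculation whose only subtlety is choosing the right form of the identities among $r_1,r_2,k,\lambda,\mu$ to land on the stated expressions.
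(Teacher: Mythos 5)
Your proposal is correct and coincides with the standard argument behind this lemma, which the paper itself does not reprove but quotes from the cited sources (Scott; Brouwer--Haemers; Fickus--Mixon): there the Krein conditions are obtained exactly as you describe, by applying the Schur product theorem to the primitive idempotents $\mathbf{E}_1,\mathbf{E}_2$ of the Bose--Mesner algebra and reading off the nonnegativity of the Krein parameters $q^{1}_{11}$ and $q^{2}_{22}$, which after the substitutions $r_1+r_2=\lambda-\mu$ and $r_1r_2=\mu-k$ are positive multiples of $K_1$ and $K_2$. Nothing further is needed.
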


Hence, if we apply the above lemma to strongly regular  graphs with parameters
$(n-1,a,\frac{3a-n}{2},\frac{a}{2})$, then (\ref{k1}) and (\ref{k2}) provide two
necessary conditions on the existence of nontrivial ETFs. The
authors of \cite{Waldron} and \cite{ETF} wondered whether these two necessary
conditions are covered by the Gerzon bound (\ref{Gerzon}) or other known necessary
conditions. In what follows we show that they are actually equivalent to the Gerzon
bound (\ref{Gerzon}).
\begin{prop}\label{pr:krein}
Assume that $n>d+1>2$. Set
\begin{equation}\label{eq:a}
a:=\frac{n}{2}-1+(1-\frac{n}{2d})\sqrt{\frac{d(n-1)}{n-d}}.
\end{equation}
The $(n,d)$ satisfies Krein conditions (\ref{k1}) and (\ref{k2})
 with parameters $v=n-1, k=a, \lambda=\frac{3a-n}{2},\mu=\frac{a}{2}$
 if and only if $(n,d)$ satisfies the  Gerzon
bound (\ref{Gerzon}).
\end{prop}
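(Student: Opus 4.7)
The plan is to evaluate the Krein polynomials $K_1$ and $K_2$ explicitly on the parameters $(v,k,\lambda,\mu)=(n-1,a,\tfrac{3a-n}{2},\tfrac{a}{2})$ and show that each of the two resulting inequalities is equivalent to one of the Gerzon bounds in (\ref{Gerzon}).

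The first step is to put $r_1,r_2$ into a clean form. Let $\beta:=\sqrt{d(n-1)/(n-d)}$, so that $\beta^{2}(n-d)=d(n-1)$ and $a=\tfrac{n}{2}-1+\tfrac{(2d-n)\beta}{2d}$. A direct computation gives $\lambda-\mu=a-\tfrac{n}{2}$ and $k-\mu=\tfrac{a}{2}$; substituting the formula for $a$ into $(\lambda-\mu)^{2}+4(k-\mu)=(a-n/2)^{2}+2a$ and using $\beta^{2}(n-d)=d(n-1)$ to cancel the $\beta$-cross-terms yields $(\lambda-\mu)^{2}+4(k-\mu)=n^{2}(n-1)/(4d(n-d))$. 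Hence $\sqrt{(\lambda-\mu)^{2}+4(k-\mu)}=n\beta/(2d)$, and (\ref{r12}) collapses to
\[
r_1=\frac{\beta-1}{2},\qquad r_2=-\frac{1}{2}-\frac{(n-d)\beta}{2d},
\]
with $r_1>0>r_2$ for $n>d+1$.

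The second step exploits the identity $a=-2r_1r_2$, which is immediate from the formulas above. This identity yields $a+r_i+2r_1r_2=r_i$ and $a+r_i=r_i(1-2r_j)$ for $\{i,j\}=\{1,2\}$, so that (\ref{k1}) and (\ref{k2}) collapse to
\[
K_i \;=\; r_i\bigl[(1-2r_j)(r_j+1)^{2}-(r_i+1)\bigr],\qquad \{i,j\}=\{1,2\}.
\]
Substituting the formulas for $r_1,r_2$ into the bracket for $K_1$, expanding, and reducing modulo $\beta^{2}(n-d)=d(n-1)$, I expect the bracket to factor as a positive rational multiple of $\beta\,n\bigl(n^{2}-(2d+1)n+d^{2}-d\bigr)$. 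The roots in $n$ of the quadratic are $d+\tfrac{1}{2}\pm\sqrt{2d+1/4}$, and the hypothesis $n>d+1$ already places $n$ above the smaller root, so $K_1\geq 0$ becomes equivalent to $n\geq d+\tfrac{1}{2}+\sqrt{2d+1/4}$, i.e.\ the lower Gerzon bound. An analogous reduction for $K_2$ should reduce the bracket, up to a positive rational factor, to $\beta\,n\bigl(d(d+1)-2n\bigr)$; combined with $r_2<0$, this gives $K_2\geq 0$ iff $n\leq d(d+1)/2$, the upper Gerzon bound. The two equivalences together prove the proposition.

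The argument is algebraically bulky rather than conceptually subtle, and the main obstacle is to carry the expansion of the third step through without error. The principal simplifying device is the identity $a=-2r_1r_2$ of the second step, which reduces each $K_i$ to a two-term bracket whose cubic in $\beta$ can be brought, via $\beta^{2}(n-d)=d(n-1)$, to a linear expression in $\beta$; the factorizations of the two resulting linear expressions are the only nontrivial algebraic checks. As a consistency test I would also verify the expected Naimark-complement symmetry $(d,n)\mapsto(n-d,n)$, under which the two Gerzon inequalities are interchanged and the associated strongly regular graph passes to its complement, predicting that $K_1$ and $K_2$ exchange their roles.
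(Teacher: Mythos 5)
Your proposal is correct and follows essentially the same route as the paper: both substitute the parameters into (\ref{r12}) to get $r_1=\frac{\beta-1}{2}$ and $r_2=-\frac12-\frac{(n-d)\beta}{2d}$ with $\beta=\sqrt{d(n-1)/(n-d)}$, then factor $K_1,K_2$ explicitly and read off $K_1\ge 0\iff n^2-(2d+1)n+d^2-d\ge 0$ and $K_2\ge 0\iff n\le \frac{d(d+1)}{2}$; your identity $a=-2r_1r_2$ is simply a tidier way of organizing the substitution the paper carries out directly. The expansions you defer do work out: the $K_1$ bracket equals $\frac{n\beta}{4d^2}\bigl(n^2-(2d+1)n+d^2-d\bigr)$, while the $K_2$ bracket equals $\frac{n\beta}{4d(n-d)}\bigl(2n-d(d+1)\bigr)$ --- a positive multiple of $2n-d(d+1)$, not of $d(d+1)-2n$ as you predicted --- and combined with $r_2<0$ this yields precisely your stated conclusion $K_2\ge 0\iff n\le\frac{d(d+1)}{2}$, so the discrepancy is only a sign slip in the predicted intermediate factorization, not a flaw in the argument.
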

\begin{proof}
Substituting $v=n-1, k=a, \lambda=\frac{3a-n}{2},\mu=\frac{a}{2}$ into equation (\ref{r12}), we can represent $r_1$ and $r_2$ as follows:
\begin{subequations}\label{rs2}
\begin{align}
r_1&=\frac{1}{2}\cdot\sqrt{\frac{d(n-1)}{n-d}}-\frac{1}{2},\label{r2}\\
r_2&=-\frac{1}{2}\cdot\frac{n-d}{d}\cdot\sqrt{\frac{d(n-1)}{n-d}}-\frac{1}{2}.\label{s2}
\end{align}
\end{subequations}
Next, substituting equation  (\ref{eq:a}), (\ref{r2}), (\ref{s2}) into (\ref{k1}) and (\ref{k2}), we obtain
\begin{subequations}\label{krein3}
\begin{align}
K_1&=\frac{n}{8d}\cdot\sqrt{\frac{n-1}{n-d}}\cdot (\sqrt{\frac{n-1}{n-d}}-\sqrt{\frac{1}{d}})\cdot(n^2-(2d+1)n+d^2-d),\label{k11}\\
K_2&=\frac{n}{8d(n-d)}\cdot(n-1+\sqrt{\frac{d(n-1)}{n-d}})\cdot(d^2+d-2n).\label{k22}
\end{align}
\end{subequations}
Since $n>d+1>2$,  $K_1\geq 0$ if and only if $n^2-(2d+1)n+d^2-d\geq0$, i.e.,  $n\geq
d+\frac12+\sqrt{2d+\frac14}$. Moreover, $K_2\geq 0$ if and only if $n\leq
\frac{d(d+1)}{2}$. We arrive at the conclusion.
\end{proof}


\section{Proof of Theorem  \ref{lev}}\label{s5}

The main goal of this section is to prove Theorem \ref{lev}.
For convenience, in the rest of this paper, we use $\alpha_{n,d}$ to denote the
Levenstein bound in (\ref{leven}), i.e.,
\begin{equation}\label{aleven}
\alpha_{n,d}\,\,:=\,\,\sqrt{\frac{3n-d(d+2)}{(d+2)(n-d)}}.
\end{equation}
We assume that $\Phi\subset\Sd$ with $|\Phi|=n$ is a Levenstein-equality packing,
i.e., $\mu(\Phi)=\alpha_{n,d}$. Hence $\Phi$ has the angle set
$\{0,\alpha_{n,d},-\alpha_{n,d}\}$ {(see \cite[Example 8.4]{Delsarte} and
\cite[Proposition 3.3]{Haas})}.

We begin with introducing two basic properties about Levenstein-equality packings.
The following lemma says that each Levenstein-equality packing gives rise to a
strongly regular graph.

\begin{lemma}{\rm{\cite[Page 83]{Neumaier}}}\label{depend}
Assume $\Phi=\{\boldsymbol\varphi_i\}_{i=1}^{n}\subset\Sd$ is a Levenstein-equality
packing with the angle set $\{0,\alpha_{n,d},-\alpha_{n,d}\}$, where $\alpha_{n,d}$
is defined in (\ref{aleven}). Let $\Gamma$ be a graph with $n$ vertices where vertex
$i$ and vertex $j$ are adjacency if $\langle
\boldsymbol\varphi_i,\boldsymbol\varphi_j\rangle\neq 0$. Then $\Gamma$ is a strongly
regular graph with parameters $(n,k,\lambda,\mu)$, where
\begin{equation}\label{para1}
k=-\frac{r_2(3\cdot r_1-r_2)}{2},\lambda=r_1+r_2+\mu, \mu=-\frac{r_2(r_1-r_2)}{2},r_1=\frac{2n}{3d}+\frac{r_2}{3}-\frac{2}{3},r_2=-\frac{1}{\alpha_{n,d}^2}.
\end{equation}
Here, $r_1$ and $r_2$ are defined in (\ref{r12}).
\end{lemma}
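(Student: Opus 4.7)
The plan is to derive the strongly-regular structure of $\Gamma$ from a fourth-moment identity satisfied by $\Phi$, and then read off the eigenvalues $r_1,r_2$ and verify the stated parameter formulas by direct algebra. The key input is the spherical-design property of $\Phi\cup-\Phi$.

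First, I would invoke the characterization from \cite[Example 8.4]{Delsarte}: since $\Phi$ is a Levenstein-equality packing with the three-valued angle set $\{0,\pm\alpha_{n,d}\}$, the antipodal union $X:=\Phi\cup-\Phi$ has strength at least $5$, i.e.\ is a spherical $5$-design. Because $f(\vx)=\langle\vu,\vx\rangle^2\langle\vv,\vx\rangle^2$ is an even polynomial of degree $4$, summing the $5$-design identity over $X$ and halving (equivalently, applying Corollary \ref{coro-anti}) yields, for any unit vectors $\vu,\vv\in\Sd$,
\begin{equation*}
\sum_{i=1}^n \langle\vu,\boldsymbol\varphi_i\rangle^2\langle\vv,\boldsymbol\varphi_i\rangle^2 \;=\; n\int_{\Sd}\langle\vu,\vx\rangle^2\langle\vv,\vx\rangle^2\,d\mu_d(\vx) \;=\; \frac{n\bigl(2\langle\vu,\vv\rangle^2+1\bigr)}{d(d+2)},
\end{equation*}
where the spherical integral is a standard computation.

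Second, I would specialize this identity to $\vu=\boldsymbol\varphi_j$, $\vv=\boldsymbol\varphi_k$. Write $\mathbf{B}$ for the Gram matrix of $\Phi$ and $\mathbf{M}:=(\mathbf{B}_{ij}^2)_{i,j}$ for its entrywise square. Since $\mathbf{B}_{ij}\in\{0,\pm\alpha_{n,d}\}$ for $i\neq j$ and $\mathbf{B}_{ii}=1$, we have $\mathbf{M}=\mathbf{I}+\alpha_{n,d}^2\mathbf{A}$, where $\mathbf{A}$ is the adjacency matrix of $\Gamma$. The specialized identity becomes the matrix equation
\begin{equation*}
(\mathbf{I}+\alpha_{n,d}^2\mathbf{A})^2 \;=\; \tfrac{n}{d(d+2)}\bigl(2\mathbf{I}+2\alpha_{n,d}^2\mathbf{A}+\mathbf{J}\bigr).
\end{equation*}
Comparing diagonal entries forces every vertex to have the common degree $k=(n-d)/(d\alpha_{n,d}^2)$, so $\Gamma$ is regular. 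Isolating $\alpha_{n,d}^4\mathbf{A}^2$ on one side then writes it explicitly as a combination $c_1\mathbf{I}+c_2\mathbf{A}+c_3\mathbf{J}$; matching against $\mathbf{A}^2=k\mathbf{I}+\lambda\mathbf{A}+\mu(\mathbf{J}-\mathbf{I}-\mathbf{A})$ yields
\begin{equation*}
\mu=\frac{n}{d(d+2)\alpha_{n,d}^4},\qquad \lambda=\mu+\frac{2\bigl(n-d(d+2)\bigr)}{d(d+2)\alpha_{n,d}^2},\qquad k=\frac{n-d}{d\alpha_{n,d}^2},
\end{equation*}
proving strong regularity.

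Finally, the two nontrivial eigenvalues $r_1,r_2$ of $\mathbf{A}$ are the roots of $x^2-(\lambda-\mu)x-(k-\mu)=0$. A direct calculation collapses the discriminant: $(\lambda-\mu)^2+4(k-\mu)=\frac{4n^2}{d^2(d+2)^2\alpha_{n,d}^4}$, whence $r_2=-1/\alpha_{n,d}^2$ and $r_1=\frac{2n-d(d+2)}{d(d+2)\alpha_{n,d}^2}$. Substituting $\alpha_{n,d}^2=\frac{3n-d(d+2)}{(d+2)(n-d)}$ into the latter rewrites it in the compact form $\tfrac{2n}{3d}+\tfrac{r_2}{3}-\tfrac{2}{3}$, and back-substitution then verifies $\mu=-r_2(r_1-r_2)/2$, $k=-r_2(3r_1-r_2)/2$, and $\lambda=r_1+r_2+\mu$. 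The main obstacle is really bookkeeping: verifying that the $5$-design hypothesis on $\Phi\cup-\Phi$ produces the stated fourth-moment identity on $\Phi$, and pushing through the two algebraic simplifications---collapsing the discriminant to $4n^2/(d^2(d+2)^2\alpha_{n,d}^4)$ and rewriting $r_1$ in terms of $r_2$---both of which reduce to one-line manipulations once the definition of $\alpha_{n,d}$ is substituted. Once the matrix identity above is in hand, everything else is routine.
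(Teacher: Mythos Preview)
Your proof is correct. The paper itself does not prove this lemma: it is stated with a citation to \cite[Page 83]{Neumaier} and only the remark following it records the explicit parameter values (\ref{para2}) obtained by substitution. So there is nothing to compare against on the paper's side.

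Your argument is a clean self-contained derivation. The fourth-moment identity you extract from the $5$-design property of $\Phi\cup-\Phi$ is exactly the right engine: the matrix equation $(\mathbf{I}+\alpha_{n,d}^2\mathbf{A})^2=\frac{n}{d(d+2)}(2\mathbf{I}+2\alpha_{n,d}^2\mathbf{A}+\mathbf{J})$ simultaneously forces regularity (diagonal) and the quadratic relation $\mathbf{A}^2\in\mathrm{span}\{\mathbf{I},\mathbf{A},\mathbf{J}\}$ (off-diagonal), which is strong regularity. I checked your discriminant computation and the back-substitutions for $k,\mu,\lambda,r_1$; they all go through. One small presentational point: the identity $\lambda=r_1+r_2+\mu$ holds for every strongly regular graph by Vieta's formula on $x^2-(\lambda-\mu)x-(k-\mu)=0$, so you need not verify it separately. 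The only place a reader might want one more line is the passage from the $5$-design property of $X=\Phi\cup-\Phi$ to the cubature rule on $\Phi$ for the even degree-$4$ polynomial; your parenthetical appeal to Corollary \ref{coro-anti} (or simply the observation that $\Phi$ and $-\Phi$ contribute equally to an even-polynomial sum) suffices.
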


\begin{remark}
Substituting (\ref{aleven}) into (\ref{para1}) we can write these parameters in terms
of $n$ and $d$ as follows:
\begin{subequations}\label{para2}
\begin{align}
k&=\frac{(n-d)^2(d+2)}{d\cdot(3n-d(d+2))},\label{kk}\\
\lambda&=\frac{(n-d)\cdot((d+8)n^2-9d(d+2)n+2d^2(d+2)^2)}{d\cdot(3n-d(d+2))^2},\label{ll}\\
\mu&=\frac{(n-d)^2(d+2)n}{d\cdot(3n-d(d+2))^2},\label{mm}\\
r_1&=\frac{(n-d)(2n-d(d+2))}{d\cdot(3n-d(d+2))},\ r_2=-\frac{(n-d)(d+2)}{3n-d(d+2)}.\label{rr}
\end{align}
\end{subequations}
Recall that $r_1$ and $r_2$ are the eigenvalues of $\mathbf{A}$, which is the
adjacency matrix of $\Gamma$, with multiplicities $n_1$ and $n_2$. Substituting
(\ref{kk}), (\ref{ll}) and (\ref{mm}) into (\ref{n12}), we can obtain the
multiplicities of $r_1$ and $r_2$ as follows
\begin{equation}\label{nn}
n_1=\frac{d(d+1)}{2}-1,\ n_2=n-\frac{d(d+1)}{2}.
\end{equation}
\end{remark}

The next lemma introduces another property of Levenstein-equality packings.

\begin{lemma}\label{al}
Let $d\geq4$ be an integer. If $\Phi=\{\boldsymbol\varphi_i\}_{i=1}^n\subset\Sd$ is a
Levenstein-equality packing with size $n$, then both
$\frac{1}{\alpha_{n,d}}$ and $\frac{n-d}{d\cdot \alpha_{n,d}}$ are integers.
\end{lemma}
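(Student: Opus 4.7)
The plan is to convert the claimed divisibility into the statement that two particular real numbers are rational algebraic integers, and extract this from the spectrum of an integer matrix attached to $\Phi$.

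First I would observe that $\Phi$ is a tight frame. Indeed, since $\Phi$ is a Levenstein-equality packing, $\Phi\cup-\Phi$ is an antipodal spherical $5$- or $7$-design (\cite[Example 8.4]{Delsarte}), and in particular a spherical $2$-design. The $2$-design identity $\sum_{\vx\in\Phi\cup -\Phi}\vx\vx^{\T}=\tfrac{2n}{d}\mathbf I$ then collapses to $\Phi\Phi^{\T}=\tfrac{n}{d}\mathbf I$, so the Gram matrix $G:=\Phi^{\T}\Phi$ has exactly two eigenvalues: $n/d$ with multiplicity $d$ and $0$ with multiplicity $n-d$.

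Next I would introduce the signed adjacency matrix $S\in\Z^{n\times n}$ defined by $S_{ii}=0$ and $S_{ij}=\operatorname{sgn}\langle\boldsymbol\varphi_i,\boldsymbol\varphi_j\rangle$ for $i\neq j$. Because the angle set of $\Phi$ is $\{0,\pm\alpha_{n,d}\}$, we have the exact identity $G=\mathbf I+\alpha_{n,d}\,S$. Consequently $S$ has precisely two distinct eigenvalues,
\[
\mu_1\,:=\,\frac{n-d}{d\,\alpha_{n,d}}\ (\text{multiplicity }d),\qquad \mu_2\,:=\,-\frac{1}{\alpha_{n,d}}\ (\text{multiplicity }n-d),
\]
and its characteristic polynomial $p(x)=(x-\mu_1)^d(x-\mu_2)^{n-d}$ lies in $\Z[x]$. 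Thus $\mu_1,\mu_2$ are algebraic integers in the field $\mathbb Q(\alpha_{n,d})$, which is either $\mathbb Q$ itself or a real quadratic extension.

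The heart of the argument is to exclude the quadratic case. If $\alpha_{n,d}\notin\mathbb Q$, the non-trivial Galois automorphism $\sigma$ of $\mathbb Q(\alpha_{n,d})/\mathbb Q$ acts by $\alpha_{n,d}\mapsto-\alpha_{n,d}$, hence $\sigma(\mu_i)=-\mu_i$; on the other hand $\sigma$ fixes $p(x)$, so the root multisets $\{\mu_1^{(d)},\mu_2^{(n-d)}\}$ and $\{(-\mu_1)^{(d)},(-\mu_2)^{(n-d)}\}$ must coincide. Since $\mu_2\neq 0$, this forces $\mu_1=-\mu_2$ together with the matching multiplicities $d=n-d$, i.e.\ $n=2d$. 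However, the Levenstein bound is attained only when $n>d(d+1)/2$, and for $d\geq 4$ one checks $d(d+1)/2\geq 2d$, so $n>2d$, a contradiction. Therefore $\alpha_{n,d}\in\mathbb Q$, whence $\mu_1$ and $-\mu_2$ are rational algebraic integers, i.e.\ integers — which are exactly $\frac{n-d}{d\alpha_{n,d}}$ and $\frac{1}{\alpha_{n,d}}$. The main obstacle I anticipate is the Galois-descent step, but the rigidity of the two-eigenvalue spectrum of $S$ together with $\mu_2\neq 0$ pins the degenerate scenario down to $n=2d$, which the hypothesis $d\geq 4$ rules out.
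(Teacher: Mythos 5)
Your proof is correct and follows essentially the same route as the paper: both build the integer matrix $\frac{1}{\alpha_{n,d}}(\mathbf{M}^{\rm T}\mathbf{M}-\mathbf{I})$ from the tight-frame (2-design) property, read off the two eigenvalues $\frac{n-d}{d\,\alpha_{n,d}}$ and $-\frac{1}{\alpha_{n,d}}$ with multiplicities $d$ and $n-d$, and then deduce rationality because a Galois conjugate would have to be an eigenvalue of the same multiplicity, which is impossible since $n>\frac{d(d+1)}{2}$ and $d\geq 4$ force $d\neq n-d$. Your explicit use of the quadratic automorphism $\alpha_{n,d}\mapsto-\alpha_{n,d}$ is just a concrete instantiation of the paper's minimal-polynomial argument, so no substantive difference.
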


\begin{proof}
Let $\mathbf{M}$ be the matrix of size $d\times n$ whose $i$-th column is
$\boldsymbol\varphi_i$. Since $\Phi$ is a Levenstein-equality packing,
$\Phi\cup-\Phi$ is an antipodal spherical $5$-design. Note that every spherical
2-design is a unit norm tight frame \cite[Proposition 6.1]{Waldron2}. Hence, $\Phi$
forms a unit norm tight frame in $\R^d$, i.e., $\mathbf{M}\mathbf{M}^{\rm
T}=\frac{n}{d}\cdot\mathbf{I}_{d\times d}$ which has eigenvalue $n/d$ with
multiplicities $d$. Set $\mathbf{G}:=\frac{1}{\alpha_{n,d}}\cdot(\mathbf{M}^{\rm
T}\mathbf{M}-\mathbf{I}_{n\times n})$. Since the nonzero eigenvalues of
$\mathbf{M}^{\rm T}\mathbf{M}$ and $\mathbf{M}\mathbf{M}^{\rm T}$ have the same value
and the same algebraic multiplicity, we see that $\mathbf{G}$ has two different
eigenvalues:
\begin{equation}\label{eigen}
\lambda_1=-\frac{1}{\alpha_{n,d}}\quad \text{and}\quad \lambda_2=\frac{1}{\alpha_{n,d}}\cdot\frac{n-d}{d}
\end{equation}
with multiplicities $n-d$ and $d$, respectively.  Moreover, since the $(i,j)$-entry
of $\mathbf{M}^{\rm T}\mathbf{M}$ is the inner product between $\boldsymbol\varphi_i$
and $\boldsymbol\varphi_j$, $\mathbf{G}$ is a matrix whose diagonal entries are all
zeros and non-diagonal entries are $0$ or $\pm 1$. This means that both $\lambda_1$
and $\lambda_2$ are algebraic integers. Since an algebraic integer is an integer if
it is a rational number, it remains to prove that both $\lambda_1$ and $\lambda_2$
are rational numbers. For the aim of contradiction, we assume  that $\lambda_1$ is
irrational. Let $f(x)$ be the minimal polynomial of $\lambda_1$. Then the
characteristic polynomial of $\mathbf{G}$ is divided by $f(x)^{n-d}$. This means that
any algebraic conjugate of $\lambda_1$ is also an eigenvalue of $\mathbf{G}$ with
multiplicity $n-d$. However, since $\Phi$ is a Levenstein-equality packing, we have $n>\frac{d(d+1)}{2}$. Combining with $d\geq4$, we have $d<n-d$, meaning that $\mathbf{G}$ does not have two eigenvalues with the same
multiplicity. This is a contradiction.  Hence, $\lambda_1$ is rational.  Since
$\lambda_2=-\lambda_1\cdot\frac{n-d}{d}$, we obtain that $\lambda_2$ is also
rational. This completes the proof.
\end{proof}

\begin{remark}\label{rm5.2}
 From viewpoint of the association scheme, the authors in \cite[Theorem
8.1]{Bannai09} shows that $\frac{1}{\alpha_{n,d}}$ is an integer if the strongly
regular graph generated by $\Phi$ in Lemma \ref{depend} is not a conference graph.
Recall that a strongly regular graph is a conference graph iff its parameters are
$(n, \frac{n-1}{2},\frac{n-5}{4},\frac{n-1}{4})$. A simple calculation shows that the
strongly regular graph with parameters described in (\ref{para2}) can never be a
conference graph. Hence, the results in \cite[Theorem 8.1]{Bannai09} also imply the
integrality of $\frac{1}{\alpha_{n,d}}$.
\end{remark}

Now we are ready to prove Theorem \ref{lev}.
\begin{proof}[Proof of Theorem \ref{lev}]
Recall that $\Phi\subset\Sd$ has the angle set
$A(\Phi)=\{0,\alpha_{n,d},-\alpha_{n,d}\}$, where $\alpha_{n,d}$ is defined in
(\ref{aleven}). {We claim that $n\geq \frac{d(d+3)}{2}$ if $n\neq \frac{d(d+2)}{2}$}.
According to Lemma \ref{al}, we can assume $\alpha_{n,d}=\frac{1}{k}$ for some
positive integer $k$.  Noting
$\alpha_{n,d}=\sqrt{\frac{3n-d(d+2)}{(d+2)(n-d)}}=\frac{1}{k}$, we have
\[
3k^2\cdot(n-\frac{d(d+2)}{3})=(d+2)(n-d)
\]
which implies
\begin{equation}\label{leven11}
(3k^2-d-2)(n-\frac{d(d+2)}{3})=\frac{1}{3}\cdot d(d-1)(d+2).
\end{equation}
We set $\alpha:=3k^2-d-2\in\Z$. Since either $n\geq \frac{d(d+3)}{2}$  or $n=
\frac{d(d+2)}{2}$, we have $n>\frac{d(d+2)}{3}$. Then (\ref{leven11}) implies that
$\alpha\in\Z_+$ and
\begin{equation}\label{eq:nalpha}
n=\frac{d(d+2)(d-1+\alpha)}{3\alpha}.
\end{equation}
We next show that $\alpha \in [2,\frac{2(d-1)(d+2)}{d+5}]\cap \Z$  if $n\neq
\frac{d(d+2)}{2}$. Indeed, {$\alpha \leq \frac{2(d-1)(d+2)}{d+5}$} follows from
$n\geq \frac{d(d+3)}{2}$. We still need show that $\alpha\geq 2$.
 For the aim of contradiction, we assume that $\alpha=1$.
  Then (\ref{eq:nalpha}) implies $n=\frac{d^2(d+2)}{3}$.
Noting that $\Phi\cup-\Phi$ is an antipodal $4$-distance set, we have
$\abs{\Phi\cup-\Phi}=2n=\frac{2d^2(d+2)}{3}$. On the other hand, according to
Delsarte-Goethals-Seidel bound, i.e. (\ref{absolute1}), we have
$\abs{\Phi\cup-\Phi}\leq \frac{d(d+1)(d+2)}{3}$. We have $\frac{2d^2(d+2)}{3}\leq
\frac{d(d+1)(d+2)}{3}$ , which is a contradiction.

We still need show $n\geq \frac{d(d+3)}{2}$ if $n\neq \frac{d(d+2)}{2}$. We assume
$n\neq \frac{d(d+2)}{2}$. According to Lemma \ref{depend}, $\Phi$ gives a strongly
regular graph $\Gamma$ with parameters described in (\ref{para2}). Let $E_2$ denote
the eigenspace of the adjacency matrix of $\Gamma$ with respect to the eigenvalue
$r_2$, and let $Y$ denote the spherical embedding of $\Gamma$ with respect to $E_2$.
{Since $Y$ is obtained by orthogonally projecting a standard basis of $\mathbb{R}^n$
onto the eigenspace $E_2$ and rescaling to have unit norm}, we know that
$Y\subset\mathbb{S}^{n_2-1}$. Combining with (\ref{nn}), we have
$Y\subset\mathbb{S}^{n-\frac{d(d+1)}{2}-1}$. By substituting (\ref{para2}) into
(\ref{embed}), we see that $Y$ is a spherical two-distance set with the angle set
$\{-\frac{d}{n-d},\frac{d}{2n-d(d+1)}\}$. {Since the Levenstein bound (\ref{leven})
is attained only if $n>\frac{d(d+1)}{2}$ \cite[Theorem 6.13]{leven2}, we have
$-\frac{d}{n-d}<0$. Also note that $\frac{d}{2n-d(d+1)}\neq 1$ since $n\neq
\frac{d(d+2)}{2}$. Hence, $Y$ contains no repeated vectors. According to the
Delsarte-Goethals-Seidel bound for spherical two-distance sets (see \cite[Theorem
4.8]{Delsarte}), we have
\begin{equation}\label{two}
|Y|=n\leq \frac{(n-\frac{d(d+1)}{2})(n-\frac{d(d+1)}{2}+3)}{2}.
\end{equation}
Rearranging the terms in (\ref{two}) and solving  a quadratic inequality gives $n\geq
\frac{d(d+3)}{2}$ or $n\leq\frac{(d-2)(d+1)}{2}$. Since $n>\frac{d(d+1)}{2}$, we
obtain $n\geq \frac{d(d+3)}{2}$. Hence, we have $n\geq \frac{d(d+3)}{2}$ if $n\neq
\frac{d(d+2)}{2}$. }
\end{proof}

\end{document}